\newtheorem{theorem}{Theorem}[section]
\newtheorem{corollary}[theorem]{Corollary}
\newtheorem{lemma}[theorem]{Lemma}
\newenvironment{proof}[1][Proof]{\noindent\textbf{#1.} }
{\hfill \ \rule{0.5em}{0.5em}}
\begin{document}

\title{\vspace{-1cm}The Anti-Ramsey Problem for the 
Sidon equation}
\author{
Vladislav Taranchuk\thanks{Department of Mathematics and Statistics, California State University Sacramento.}
\and
Craig Timmons\thanks{Department of Mathematics and Statistics, California State University Sacramento.  
This work was supported by a grant from the Simons Foundation (\#359419, Craig Timmons).} 
}

\maketitle

\vspace{-1em}

\begin{abstract}
For $n \geq k \geq 4$, let $AR_{X + Y = Z + T}^k (n)$ be the maximum number 
of rainbow solutions to the Sidon equation $X+Y = Z + T$ over all $k$-colorings 
$c:[n] \rightarrow [k]$.  It can be shown that the 
total number of solutions in $[n]$ to the Sidon equation is 
$n^3/12 + O(n^2)$ and so, trivially, $AR_{X+Y = Z + T}^k (n) \leq n^3 /12 + O (n^2)$.
We improve this upper bound to 
\[
AR_{X+Y = Z+ T}^k (n) \leq \left( \frac{1}{12} - \frac{1}{24k} \right)n^3 + O_k(n^2)
\]
for all $n \geq k \geq 4$.  Furthermore, we give an explicit $k$-coloring of $[n]$
with more rainbow solutions to the Sidon equation than a random $k$-coloring, and 
gives a lower bound of 
\[
\left( \frac{1}{12} - \frac{1}{3k} \right)n^3 - O_k (n^2) 
\leq AR_{X+Y = Z+ T}^k (n).
\]
When $k = 4$, we use a different approach based on additive energy to 
obtain an upper bound of $3n^3 / 96 + O(n^2)$, whereas our lower bound 
is $2n^3 / 96 - O (n^2)$ in this case.
\end{abstract}


\section{Introduction}

Most of the notation we use is standard.  For a positive integer $n$, let  
$[n]  = \{1,2, \dots , n \}$.  If $X$ is a set and $m \geq 0$ is an integer, 
then $\binom{X}{m}$ is the set of all subsets of $X$ of size $m$.  
A \emph{$k$-coloring} of a set $X$ is a function $c: X \rightarrow [k]$.  The function 
$c$ need not be onto.  A subset $Y \subset X$ is \emph{monochromatic}
under $c$ if $c(y) = c(y')$ for all $y,y' \in Y$.  The set $Y $ is 
\emph{rainbow} if no two elements of $Y$ have been assigned 
the same color.  

The hypergraph Ramsey Theorem states that for any positive integers $s$, $k$, and $m$, 
there is an $N = N(s,k,m)$ such that for all $n \geq N$ the following holds:  
if $c$ is any $k$-coloring of $\binom{ [n] }{m}$,
then there is a set $S \subset [n]$ such that 
$\binom{ S}{m}$ is monochromatic under $c$.    
This theorem is one of the most important theorems in combinatorics.
Today, Ramsey Theory is a cornerstone in combinatorics 
and there is a vast amount of literature on Ramsey type problems.  
Here we will focus on a Ramsey problem in the integers and recommend
Landman and Robertson \cite{lr} for a more comprehensive introduction to this area.    
The problem we consider is inspired by the investigations of two recent papers.  

In \cite{saad}, Saad and Wolf introduced an arithmetic analog of some problems in 
graph Ramsey Theory.
In particular, given a graph $H$, let $RM_k ( H , n )$ be the 
minimum number of monochromatic copies of $H$ over 
all $k$-colorings $c: E( K_n) \rightarrow [k]$.  The parameter 
$RM_k(H,n)$ is the \emph{Ramsey multiplicity} of $H$ and has been studied for different graphs $H$.    
The arithmetic analog from \cite{saad} replaces graphs with 
linear equations, and sets up a general framework where
these ideas from graph theory (Sidorenko property, Ramsey multiplicty) have
natural counterparts.  Fix an abelian group $\Gamma$.    
If $L$ is a linear equation with integer coefficients, one can look at the minimum 
number of monochromatic solutions to $L$ over all 
$k$-colorings $c: \Gamma \rightarrow [k]$.          
One of the first examples given in \cite{saad} (see Example 1.1) concerns the  
Sidon equation $X + Y = Z+T$.  This famous equation has a rich 
history in combinatorics.  A \emph{Sidon set} in an abelian 
group $\Gamma$ is a set having only trivial solutions to $X+ Y = Z + T$.  
If $A \subset \Gamma$ is a Sidon set and $\Gamma$ is finite, then a simple counting 
argument gives $|A| \leq 2 | \Gamma |^{1/2}   + 1$.  The constant 
2 can be improved in many cases, but 
what concerns us here is that when $A$ is much larger, 
say $|A| = \alpha | \Gamma |$ for some $\alpha >0$, then 
$A$ will certainly contain nontrivial solutions to the Sidon equation. 
Thus, a natural question is given
 a $k$-coloring $c: \Gamma \rightarrow [k]$, at least how 
 many solutions to the Sidon equation must be monochromatic.
 This question, and several others including 
 results on $X+Y = Z$ (Schur triples) and $X+Y = 2Z$ (3 term a.p.'s), is answered by the results of \cite{saad}.
 For more in this direction, we refer the reader to that paper.  
 
Recently, De Silva, Si, Tait, Tun\c{c}bilek, Yang, and Young \cite{desilva}
studied a rainbow version of Ramsey multiplicity.  
Instead of looking at 
the minimum number of monochromatic copies of $H$
over all $k$-colorings $c: E(K_n) \rightarrow [k]$, 
De Silva et.\ al\ look at the maximum number of 
rainbow copies of $H$.  One must consider rainbow copies of $H$
since giving every edge of $K_n$ the same color clearly maximizes
the number of monochromatic copies.  
Define $rb_k (H ; n)$ to be 
the maximum number of rainbow copies of $H$ 
over all $k$-colorings $c: E(K_n) \rightarrow [k]$.  This parameter 
is called the \emph{anti-Ramsey multiplicity} of $H$, and  
\cite{desilva} investigates the behavior of this function for different graphs $H$.  

In this paper, we consider an arithmetic analog of 
anti-Ramsey multiplicity thereby combining the problems raised in 
\cite{desilva} with the arithmetic setting of \cite{saad}.  
We will focus entirely on the Sidon equation $X+Y  = Z + T$.
The Sidon equation measures the additive energy of a set.   
The additive 
energy of a set $A \subset \Gamma$ 
is the number of four tuples $(a,b,c,d) \in A^4$ such that 
$a + b = c + d$. 
Typically it is written as 
\[
E(A) = | \{ ( a,b,c,d ) \in A^4 : a + b = c + d \} |.
\]
This fundamental 
parameter measures the additive structure of $A$, and for more on additive energy, see Tao and Vu \cite{tv}.
Additive energy is perhaps one of the reasons why the Sidon equation is used as a first example 
in \cite{saad}.  We would also like to remark that rainbow solutions to the Sidon equation were studied by Fox,     
Mahdian, and Radoi\v ci\'c \cite{fox}.  They proved that in every 4-coloring of $[n]$ 
where the smallest color classes has size at least $\frac{n+1}{6}$, there is at least one rainbow 
solution to the Sidon equation.   This result is also 
discussed in \cite{jnr} which surveys several problems 
on conditions ensuring a rainbow solution to an equation.

Since we are interested in the maximum number of rainbow solutions 
to $X+Y = Z+ T$,  
we must take a moment to carefully describe how solutions are counted.
First, since we are only counting rainbow solutions, we only 
care about solutions to $X+Y = Z + T$ in which all of the terms are distinct.  
Additionally, we want to count solutions that can be obtained 
by interchanging values on the same side of the equation 
as being the same.  
With this in mind, 
we define a set of four distinct integers 
$\{x_1 , x_2 , x_3 , x_4 \} \in \binom{ [n] }{4}$ 
a \emph{Sidon 4-set} if these integers form a solution to the 
Sidon equation $X+Y = Z + T$.  
Given a 
Sidon 4-set $\{x_1 , x_2 , x_3 , x_4 \}$, we can determine exactly which 
pairs appear on each side of $X+ Y = Z + T$.  
Without loss of generality, we may assume 
that $x_1$ is the largest among the $x_i$'s and $x_4$ is the smallest.
It follows that  $x_1 + x_4 = x_2 + x_3$
and again without loss of generality, we may assume $x_2 > x_3$ so $x_1 > x_2 > x_3 > x_4$. 
In short, given a Sidon 4-set $\{x_1,  x_2 , x_3 , x_4 \}$, the two 
extreme values appear on one side of $X+Y = Z + T$, and the two middle values appear 
on the other side.  

Now we are ready to define the Ramsey function that is the focus of this work.  
Let $n \geq k \geq 4$ be integers.  We define 
\[
AR_{ X+Y = Z + T }^k(n)
\]
to be the maximum number of rainbow 
Sidon 4-sets over  
all colorings $c: [n] \rightarrow [k]$.  
It can be shown that the total number of Sidon 4-sets in $[n]$ is exactly 
\[
\frac{n^3}{12} - \frac{3n^2}{8} + \frac{5n}{12} - \theta
\]
where $\theta = 0$ if $n$ is even, and $\theta = \frac{1}{8}$ if $n$ is odd.  
This immediately implies the upper bound
\[
AR_{X+Y = Z +T}^k (n) \leq \frac{n^3}{12}  - \frac{3n^2}{8} + \frac{5n}{12}
\]
for $n \geq k \geq 4$.  A First Moment Method argument gives a lower bound of 
\[
\left( \frac{1}{12} - \frac{1}{2k} + O \left( \frac{1}{k^2} \right) \right) n^3 - O_k (n^2) \leq AR_{X + Y = Z + T }^k (n) .
\]
Our first theorem improves both of these bounds.

\begin{theorem}\label{main theorem for k colors}
For integers $n \geq k \geq 4$, 
\[
\left( \frac{1}{12} - \frac{1}{3k} + \frac{\theta }{k^2} \right) n^3 - O_k (n^2) 
\leq 
AR_{X+Y = Z+T}^k (n) \leq \left( \frac{1}{12} - \frac{1}{24k} \right)n^3 + O_k(n^2)
\]
where $\theta = \frac{1}{3}$ if $k$ is even, and $\theta = \frac{1}{4}$ if $k$ is odd.  
\end{theorem}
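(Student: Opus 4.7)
For the \textbf{lower bound}, I would use the coloring $c(x) = ((x-1) \bmod k) + 1$. Parameterize a Sidon 4-set with $x_1 > x_2 > x_3 > x_4$ and $x_1 + x_4 = x_2 + x_3$ by the two differences $u = x_1 - x_2 = x_3 - x_4 \geq 1$ and $v = x_2 - x_3 \geq 1$; the four residues $x_i \bmod k$ are then $x_4, x_4 + u, x_4 + u + v, x_4 + 2u + v$. The 4-set is rainbow precisely when $u, v, u+v, 2u+v$ are all nonzero modulo $k$. Inclusion--exclusion on the $k^2$ pairs $(u \bmod k, v \bmod k)$ gives $(k-1)(k-3)$ good pairs when $k$ is odd and $(k-2)^2$ when $k$ is even; the discrepancy arises because for even $k$ the condition $2u \equiv 0 \pmod k$ admits the solution $u = k/2$, whose associated bad $v = 0$ is already excluded by the $v \neq 0$ condition, eliminating one would-be new exclusion. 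Each good residue class contributes $n^3/(12 k^2) + O_k(n^2)$ Sidon 4-sets (by summing over $x_4, u, v$ along arithmetic progressions), and assembling the two cases yields the claimed lower bound with $\theta = 1/4$ (odd) or $\theta = 1/3$ (even).

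For the \textbf{upper bound}, let $c : [n] \to [k]$ be any coloring with classes $A_1, \ldots, A_k$, and let $M_2$ denote the number of Sidon 4-sets whose middle pair $\{x_2, x_3\}$ is monochromatic. Every such 4-set is non-rainbow, so $AR_{X+Y=Z+T}^k(n) \leq n^3/12 - M_2 + O(n^2)$. Fix a monochromatic pair $\{p, q\}$ with $p < q$: the Sidon 4-sets with $(x_3, x_2) = (p, q)$ are parameterized by $x_4 \in [\max(1, p+q-n), p-1]$ (then $x_1 = p+q-x_4$ is forced), giving exactly $\min(p-1, n-q)$ choices. Using the identity $\min(p-1, n-q) = |\{m \geq 1 : \{p, q\} \subset [m+1, n-m]\}|$ and swapping summation order yields
\[
M_2 \;=\; \sum_{m \geq 1} \sum_{i=1}^{k} \binom{|A_i \cap [m+1, n-m]|}{2}.
\]
For each $m$, since $\sum_i |A_i \cap [m+1, n-m]| = n - 2m$, Cauchy--Schwarz (equivalently, Jensen's inequality applied to the convex function $\binom{\cdot}{2}$) gives $\sum_i \binom{|A_i \cap [m+1, n-m]|}{2} \geq \frac{1}{2}\bigl((n-2m)^2/k - (n-2m)\bigr)$. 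Summing over $m$ from $1$ to $\lfloor (n-1)/2 \rfloor$, with leading contribution $\sum_m (n-2m)^2 = n^3/6 + O(n^2)$, produces $M_2 \geq n^3/(12k) - O_k(n^2)$, which is more than enough to give the stated upper bound.

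The \textbf{main obstacle} is identifying the right combinatorial object for the upper bound. The key insight is that the quantity $\min(p-1, n-q)$ arising from the middle-pair count has a clean dualization via the nested intervals $[m+1, n-m]$, converting a sum over monochromatic pairs into a sum of $\binom{\cdot}{2}$-counts perfectly set up for Cauchy--Schwarz; without this change of viewpoint, bounding a sum of the form $\sum_{c(p)=c(q)} \min(p-1, n-q)$ directly is awkward because the adversary can attempt to concentrate monochromatic pairs near the ends of $[n]$. The lower bound is more mechanical but requires care with the parity case split for the counts $(k-1)(k-3)$ versus $(k-2)^2$; carefully handling the $O_k(n^2)$ error terms is routine in both directions.
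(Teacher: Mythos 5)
Your proof is correct, and both halves take routes genuinely different from the paper's. For the lower bound you use the same coloring (residues mod $k$), but where the paper counts rainbow solutions through representation functions $r_{X_i+X_j}$ of the arithmetic-progression color classes (Lemma \ref{another new lemma}) combined with an exact count of Sidon 4-sets in $\mathbb{Z}_k$ (Lemma \ref{zk lemma}), you parameterize each Sidon 4-set by its difference pattern $(u,v)$ and count the good residue classes by inclusion--exclusion; your counts $(k-1)(k-3)$ and $(k-2)^2$, multiplied by $n^3/(12k^2)$, reproduce exactly the paper's constants $\frac{1}{12}-\frac{1}{3k}+\frac{\theta}{k^2}$ (your phrasing of why the parity of $k$ changes the count is a little loose --- the point is that for even $k$ the pair $(u,v)=(k/2,0)$ killed by $2u+v\equiv 0$ is already excluded by $v\equiv 0$, so the event $2u+v\equiv 0$ removes only $k-2$ new pairs rather than $k-1$ --- but your final counts are right). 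For the upper bound your argument is not only different but strictly stronger: the paper sums the uniform bound $f_n(\{b<a\})\geq \frac{n}{2}-4$ (Lemma \ref{simpler pairs counting}) over \emph{all} monochromatic pairs and must divide by $6$ because a 4-set contains up to six pairs, yielding $\frac{1}{12}-\frac{1}{24k}$; you instead charge each non-rainbow 4-set only to its (unique) monochromatic \emph{middle} pair, so there is no overcounting, and you handle the position-dependent count $\min(p-1,n-q)$ via the nested-interval identity and convexity level by level, obtaining $M_2\geq \frac{n^3}{12k}-O_k(n^2)$ and hence $AR^k_{X+Y=Z+T}(n)\leq\left(\frac{1}{12}-\frac{1}{12k}\right)n^3+O_k(n^2)$. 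This is precisely the improvement from $\frac{1}{24k}$ to $\frac{1}{12k}$ that the paper's concluding remarks suggest should be attainable, and it of course implies the stated bound; the error-term bookkeeping you leave implicit is routine.
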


When $k = 4$, we can improve the upper bound using a different argument.

\begin{theorem}\label{main result for 4 colors}
For $n \geq 4$, 
\[
\frac{2n^3}{96} - O(n^2) 
\leq 
AR_{X+Y = Z+T}^4 (n) \leq \frac{3n^3}{96} + O(n^2).
\]
\end{theorem}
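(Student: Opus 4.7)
The lower bound $\tfrac{2n^3}{96}-O(n^2)$ comes for free by specializing the construction used to prove Theorem~\ref{main theorem for k colors} to $k=4$: since $k$ is even we have $\theta=\tfrac{1}{3}$, so that theorem yields at least $\bigl(\tfrac{1}{12}-\tfrac{1}{12}+\tfrac{1}{48}\bigr)n^3-O(n^2)=\tfrac{n^3}{48}-O(n^2)=\tfrac{2n^3}{96}-O(n^2)$ rainbow Sidon 4-sets. So the content of Theorem~\ref{main result for 4 colors} is the sharper upper bound.

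For the upper bound, fix a 4-coloring $c:[n]\to[4]$ with color classes $A_1,A_2,A_3,A_4$, and write $r_{AB}(s):=|\{(x,y)\in A\times B:\,x+y=s\}|$, $E(A,B):=\sum_s r_{AB}(s)^2$, and $T_{ij|kl}:=\sum_s r_{A_iA_j}(s)\,r_{A_kA_l}(s)$. The first step is the structural identity
\[
AR_{X+Y=Z+T}^4(n)\;=\;T_{12|34}+T_{13|24}+T_{14|23}+O(n^2).
\]
This follows because each rainbow Sidon 4-set $\{x_1>x_2>x_3>x_4\}$ with $x_1+x_4=x_2+x_3$ has $\{c(x_1),c(x_4)\}$ and $\{c(x_2),c(x_3)\}$ forming a complementary pair of $2$-element subsets of $\{1,2,3,4\}$; exactly one of the three unordered bipartitions matches, and the set contributes one ordered tuple to the corresponding $T_{ij|kl}$. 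Degenerate Sidon 4-sets with more than one equal-sum pairing form 3-term arithmetic progressions completed by a fourth element and are easily seen to contribute only $O(n^2)$.

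Cauchy--Schwarz gives $T_{ij|kl}\le\sqrt{E(A_i,A_j)\,E(A_k,A_l)}$, and AM--GM on each summand yields
\[
AR_{X+Y=Z+T}^4(n)\;\le\;\tfrac{1}{2}\sum_{1\le i<j\le 4}E(A_i,A_j)+O(n^2).
\]
So the key reduction is to prove $\sum_{i<j}E(A_i,A_j)\le\tfrac{n^3}{16}+O(n^2)$. I would attempt this pointwise: setting $r_i(s):=r_{A_i,[n]}(s)=\sum_j r_{A_iA_j}(s)$, so that $\sum_i r_i(s)=r_{[n]}(s)$, the trivial inequality $\sum_{j\ne i}r_{A_iA_j}(s)^2\le r_i(s)^2$ summed over $i$ and halved gives $\sum_{i<j}r_{A_iA_j}(s)^2\le\tfrac{1}{2}\sum_i r_i(s)^2$. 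Summing over $s$ and combining the identity $\sum_s r_{[n]}(s)^2=\tfrac{2n^3}{3}+O(n^2)$ with the Cauchy--Schwarz lower bound $\sum_i r_i(s)^2\ge\tfrac{1}{4}r_{[n]}(s)^2$ (the \emph{balancing} content of a 4-partition) should then deliver the required estimate.

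The hardest step will be nailing down the final constant. The chain of pointwise inequalities as written only gives $\tfrac{n^3}{24}+O(n^2)$, still a factor $\tfrac{4}{3}$ too large. To close the gap one must exploit that the equality case of the pointwise Cauchy--Schwarz requires, at each sum $s$, a very restrictive pairing structure among the classes $A_1,\ldots,A_4$, and this cannot persist across all $s$ since $[n]=A_1\sqcup A_2\sqcup A_3\sqcup A_4$. The cleanest route is probably via a global averaging identity relating $\sum_i E(A_i,[n])$, $\sum_{i<j}E(A_i,A_j)$, and $E([n])$, which together force the rainbow count down to the sharp $\tfrac{3n^3}{96}+O(n^2)$.
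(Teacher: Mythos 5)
Your lower bound is exactly the paper's: the paper obtains the bound $\tfrac{2n^3}{96}-O(n^2)$ precisely by specializing Theorem~\ref{main theorem for k colors} (via Corollary~\ref{corollary lb for k colors}) to $k=4$, $\theta=\tfrac13$, and your structural identity $AR^4_{X+Y=Z+T}(n)=T_{12|34}+T_{13|24}+T_{14|23}$ is the paper's starting point as well, written there as $N(c)=E_4(X_1,X_2,-X_3,-X_4)+E_4(X_1,X_3,-X_2,-X_4)+E_4(X_1,X_4,-X_2,-X_3)$. The divergence, and the gap, is in how the three bipartition terms are bounded. After Cauchy--Schwarz and AM--GM you need the universal inequality $\sum_{i<j}E(A_i,A_j)\le \tfrac{n^3}{16}+O(n^2)$ for every $4$-partition of $[n]$, and this is false: take $A_1,A_2$ to be the two halves of $[n]$ and $A_3,A_4$ singletons; then $E(A_1,A_2)=\sum_m r_{A_1A_2}(m)^2\approx \tfrac{2(n/2)^3}{3}=\tfrac{n^3}{12}>\tfrac{n^3}{16}$, and even the post--AM--GM quantity $\tfrac12\sum_{i<j}E(A_i,A_j)\approx\tfrac{n^3}{24}$ exceeds the target $\tfrac{3n^3}{96}=\tfrac{n^3}{32}$. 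The AM--GM step discards exactly the information that controls such colorings: $T_{12|34}\le\sqrt{E(A_1,A_2)E(A_3,A_4)}$ is small because $A_3,A_4$ are small, whereas $\tfrac12\bigl(E(A_1,A_2)+E(A_3,A_4)\bigr)$ is not. Your proposed pointwise repair is also directionally wrong: $\sum_i r_i(s)^2\ge\tfrac14 r_{[n]}(s)^2$ is a \emph{lower} bound, while your chain needs an \emph{upper} bound on $\tfrac12\sum_i r_i(s)^2$; nothing in the problem forces the partition to be balanced, so no ``balancing'' inequality is available, and the closing appeal to an unspecified global averaging identity is where the actual proof would have to happen.

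For contrast, the paper bounds each term $T_{ij|kl}=\sum_m r_{A_iA_j}(m)\,r_{A_kA_l}(m)$ \emph{individually}, keeping all four cardinalities in play: by Lev's theorem (Theorem~\ref{lev theorem}) each color class may be replaced by a symmetric interval of the same size, and Lemmas~\ref{two sets}--\ref{energy count} show that for symmetric intervals of total length at most $n$ each such sum is at most $E_4(J,J,J,J)$ with $J=[-n/8,n/8]$, i.e.\ at most $\tfrac{n^3}{96}+O(n^2)$ per term, giving $\tfrac{3n^3}{96}+O(n^2)$. If you want to rescue your scheme, you must not pass to $\tfrac12\sum_{i<j}E(A_i,A_j)$; instead keep the products (or geometric means) and optimize over the class sizes $|A_i|$ --- which is, in effect, what the paper's compression argument accomplishes.
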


The lower bound in Theorem \ref{main result for 4 colors} is a consequence 
of Theorem \ref{main theorem for k colors}.
Finding an asymptotic formula for $AR_{X + Y = Z + T}^k (n)$ is an open problem.

The rest of this paper is organized as follows.  In Sections \ref{section 2} and \ref{section 3}, we prove the upper bounds of 
Theorem \ref{main theorem for k colors} and Theorem \ref{main result for 4 colors}, respectively.  
The lower bound is proved in 
Section \ref{section 4}.  Some concluding remarks and further discussion is given in Section \ref{conclusion}.


\section{An Upper bound for $k$ colors}\label{section 2}

Key to our upper bound for $k > 4$ is the following lemma.  It gives a lower bound for the number 
of Sidon 4-sets that contain a fixed pair.  It will be applied to 
pairs that are monochromatic under a given coloring $c$.

\begin{lemma}\label{simpler pairs counting}
Let $n $ be a positive integer and let $\{ b < a \} \in \binom{ [n] }{2}$.  Define $f_n( \{ b < a \})$  to be the number of Sidon 4-sets $\{x_1 , x_2 , x_3 , x_4 \} \in \binom{ [n] }{4}$ with 
$\{a ,b  \} \subset \{x_1 , x_2 , x_3 , x_4 \}$. Then $f_n$ satisfies 
\[
f_n ( \{ b < a \} ) \geq \frac{n}{2} - 4.
\]
\end{lemma}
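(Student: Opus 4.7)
Set $d = a - b > 0$. The strategy is to exhibit two disjoint families of Sidon $4$-sets containing both $a$ and $b$ and to lower-bound each. First, observe that the reflection $x \mapsto n + 1 - x$ is an involution of $[n]$ preserving Sidon $4$-sets, and it sends the pair $\{b < a\}$ to $\{n+1-a < n+1-b\}$ with the same difference $d$ but sum $2(n+1) - (a+b)$. Hence, without loss of generality, I may assume $a + b \le n + 1$.

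The first family, the \emph{shifts}, consists of $4$-sets of the form $\{a, b, t, t + d\}$ with $t \in [1, n - d]$ and $\{t, t+d\} \cap \{a, b\} = \emptyset$; each is a Sidon $4$-set via $a + t = b + (t + d)$. The disjointness constraint excludes only $t \in \{a, b, 2b - a\}$ (since $t+d = a$ forces $t = b$ and $t+d = b$ forces $t = 2b-a$), so this family contributes at least $n - d - 3$ members. The second family, the \emph{reflections}, consists of $4$-sets $\{a, b, s, a + b - s\}$ where $s$ and $a+b-s$ are distinct elements of $[n] \setminus \{a, b\}$, with Sidon equation $a + b = s + (a + b - s)$. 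Under $a + b \le n + 1$, the smaller element $s$ may range over $[1, \lfloor (a + b - 1)/2 \rfloor]$, and discarding $s = b$ (which would give $s' = a$) leaves at least $\lfloor (a + b - 1)/2 \rfloor - 1 \ge (a + b)/2 - 2$ members.

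A $4$-set belonging to both families would force its two new elements to differ by $d$ and also sum to $a + b$, giving $2t + d = a + b$, i.e.\ $t = b$, which is excluded; hence the counts add without double-counting. This yields
\[
f_n(\{b < a\}) \ge (n - d - 3) + \left(\frac{a + b}{2} - 2\right) = n - \frac{a}{2} + \frac{3b}{2} - 5 \ge \frac{n}{2} - 4,
\]
the last inequality using $a \le n$ and $b \ge 1$. The main technical point is tracking the few collisions between $\{t, t+d\}$ or $\{s, a+b-s\}$ and $\{a, b\}$ precisely enough that the exclusions fit comfortably within the generous $-4$ slack; once that is done, the argument is purely arithmetic.
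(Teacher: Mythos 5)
Your proof is correct and follows essentially the same route as the paper: your ``shifts'' family is the paper's case $a + x_i = b + x_j$ and your ``reflections'' family is its case $a + b = x_i + x_j$, with the same exclusion counts and disjointness observation. The only difference is your use of the involution $x \mapsto n+1-x$ to assume $a+b \le n+1$, which neatly eliminates the paper's second sub-case of its Claim 1 but does not change the substance of the argument.
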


\begin{proof}
We will consider two possibilities depending on the positioning of $a$ and $b$ within the equation $x_1 + x_4 = x_2 + x_3$.

\bigskip
\noindent
\textit{Claim 1:} If $a + b = x_i + x_j$, then the number of $x_i$, $x_j \in [n]$ with $x_i < x_j$ that satisfy 
this equation is at least 
\[
\left\{
\begin{array}{ll}
\lfloor\frac{a+b-1}{2}\rfloor -1 & \mbox{if $a + b \leq n+1$,} \\
n- \lfloor\frac{a+b-1}{2}\rfloor -1 & \mbox{if $a+b > n+1$}.
\end{array}
\right.
\]
\medskip
\noindent
\textit{Proof of Claim 1:} First suppose $a + b \leq n + 1$.
Let $x_i = m$ and $x_j = a + b - m$ where $1 \leq m \leq \lfloor \frac{ a + b - 1}{2} \rfloor$. 
Since $a+b \leq n+1$, the integers $x_i$ and $x_j$ are in $[n]$ for all $m$ in the specified range. 
Since we must exclude $x_i = b$, $x_j = a$ ($a$, $b$, $x_i$, and $x_j$ must all be distinct to be a Sidon 4-set), 
we obtain that the total amount of possible values for $x_i, x_j$ is at least $\lfloor \frac{a+b-1}{2} \rfloor -1$.

\medskip

Now suppose $a+b > n + 1$. Let $x_i = a + b - n +m$ and $x_j = n - m$ where $0 \leq m \leq n - \lfloor \frac{a+b-1}{2} \rfloor$. 
Since $a+b > n + 1$, we have $n - m \leq x_i < x_j \leq n$ for all $m$ in the specified range.  As before, 
the solution $x_i  = b$ and $x_j = a$ must be excluded.  Here we obtain that the total amount of possible values for 
$x_i$ and $x_j$ is at least 
\[
n - \left\lceil \frac{ a + b - 1 }{2} \right\rceil \geq n - \left\lfloor \frac{ a + b - 1}{2} \right\rfloor -1.
\]  

\bigskip
\noindent
\textit{Claim 2:} If $a +x_i = b +x_j$, then the number of $x_i , x_j \in [n]$ with $x_i < x_j$ that satisfy this equation is 
at least $n - (a-b) - 3$.

\medskip
\noindent
\textit{Proof of Claim 2:} Note that $a +x_i = b +x_j$ implies $a -b =  x_j - x_i$. Since $b <a$, we have that $a-b > 0$. Let $x_i = m$ and $x_j = a - b + m$.  The range of $m$ for which we have a valid solution is $1 \leq m \leq n - (a-b)$. However, 
we also require that $\{a , b \} \cap \{ x_i , x_j \} = \emptyset$ and so the solutions 
$(x_i , x_j) = ( 2b - a , b)$, $(x_i , x_j) = (b ,a)$, and $(x_i , x_j)  = ( a , 2a - b )$ must all be excluded.  
Thus, we obtain the number $x_i$, $x_j$ that satisfy the equation $a + x_i = b + x_j$ and all other constraints 
is at least $n -(a-b) -3$. 

\medskip

These two possibilities ($a+b = x_i + x_j$ and $a+x_i = b + x_j$) are disjoint and cover all possible positions for $a$ and $b$. 
A lower bound on the number of Sidon 4-sets $\{x_1 , x_2 , x_3 , x_4 \} \in \binom{ [n] }{4}$ with 
$\{a ,b  \} \subset \{x_1 , x_2 , x_3 , x_4 \}$ is obtained by combining these two cases. 
So we have that if $a+b \leq n+1$, then the amount of Sidon 4-sets that contain $a$ and $b$ is at least 
\[
\left\lfloor\frac{a+b-1}{2} \right\rfloor -1 + n-(a-b) -3 \geq n - \frac{a}{2} + \frac{3b}{2} - \frac{11}{2} \geq  \frac{n}{2} - 4.
\]
If  $a+b > n+1$, then the amount of Sidon 4-sets that contain $a$ and $b$ is at least
\[
n- \left\lfloor\frac{a+b-1}{2} \right\rfloor -1 + n-(a-b) -3 \geq 2n - \frac{3a}{2} + \frac{b}{2} - \frac{9}{2} \geq  \frac{n}{2} - 4.
\]
\end{proof}

\bigskip

\begin{theorem}\label{general upper bound for all k}
For integers $n \geq k \geq 4$, 
\[
AR_{X+Y =  Z +T}^k (n) \leq \left( \frac{1}{12}  - \frac{1}{24k } \right)n^3 + O_k(n^2).
\]
\end{theorem}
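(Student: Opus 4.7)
The plan is to fix an arbitrary $k$-coloring $c:[n] \to [k]$ and lower-bound the number of Sidon 4-sets that are \emph{not} rainbow, which is equivalent to upper-bounding the number that are rainbow. A Sidon 4-set fails to be rainbow precisely when it contains at least one monochromatic pair, so the strategy is to count incidences between monochromatic pairs and the Sidon 4-sets containing them.

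First, I would use convexity. Let $n_i = |c^{-1}(i)|$ for $i \in [k]$, so $\sum_{i=1}^k n_i = n$. Then the total number of monochromatic pairs is
\[
M(c) = \sum_{i=1}^k \binom{n_i}{2} \geq k \binom{n/k}{2} = \frac{n^2}{2k} - \frac{n}{2},
\]
with the inequality following from Jensen's inequality (or a direct convexity argument with appropriate handling of non-integral $n/k$, which only affects lower-order terms).

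Next I would apply Lemma \ref{simpler pairs counting}: each monochromatic pair $P$ is contained in at least $n/2 - 4$ Sidon 4-sets. Counting incidences $(P, S)$ where $P$ is a monochromatic pair and $S$ is a Sidon 4-set with $P \subset S$, we get
\[
\#\{(P,S)\} = \sum_{P} f_n(P) \geq M(c)\left(\frac{n}{2} - 4\right) \geq \left(\frac{n^2}{2k} - \frac{n}{2}\right)\left(\frac{n}{2}-4\right).
\]
On the other hand, every Sidon 4-set contains exactly $\binom{4}{2} = 6$ pairs, so each non-rainbow Sidon 4-set contributes at most $6$ to this incidence count. Therefore the number of non-rainbow Sidon 4-sets is at least
\[
\frac{1}{6}\left(\frac{n^2}{2k} - \frac{n}{2}\right)\left(\frac{n}{2}-4\right) = \frac{n^3}{24k} - O_k(n^2).
\]

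Finally, subtracting from the exact total number of Sidon 4-sets, which is $\frac{n^3}{12} + O(n^2)$ as stated in the introduction, yields
\[
AR_{X+Y=Z+T}^k(n) \leq \frac{n^3}{12} - \frac{n^3}{24k} + O_k(n^2) = \left(\frac{1}{12} - \frac{1}{24k}\right)n^3 + O_k(n^2).
\]
There is no real obstacle here since Lemma \ref{simpler pairs counting} does the heavy lifting; the only mildly delicate point is the factor of $6$ from double counting, which is tight when a Sidon 4-set is entirely monochromatic but gives away a factor when only one pair is monochromatic. Sharpening this constant would require a more refined analysis that distinguishes Sidon 4-sets by how many monochromatic pairs they contain, but the bound as stated follows from the straightforward double-counting above.
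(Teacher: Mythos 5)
Your proposal is correct and follows essentially the same route as the paper: convexity to bound the number of monochromatic pairs below by $\frac{n^2}{2k} - \frac{n}{2}$, Lemma \ref{simpler pairs counting} to give each such pair at least $\frac{n}{2}-4$ Sidon 4-sets, and a double count with the factor $\binom{4}{2}=6$ to conclude that at least $\frac{n^3}{24k} - O_k(n^2)$ Sidon 4-sets are not rainbow. Even your closing remark about the loss in the factor of $6$ mirrors the paper's observation that only fully monochromatic 4-sets are counted six times, so there is nothing to add.
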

\begin{proof}
Let $c: [n] \rightarrow [k]$ be a $k$-coloring of $[n]$.
Let $X_i$ be the integers assigned color $i$ by $c$.  
Let $\mathcal{M} \subset \binom{ [n] }{2}$ be the set of all pairs $\{ b < a \}$ which are 
monochromatic under $c$, i.e., $c(a) = c(b)$.  
Then  
\begin{equation}\label{100}
| \mathcal{M} | = \sum_{i = 1}^k \binom{ |X_i  |}{2} =  \frac{1}{2} \sum_{i = 1}^k | X_i |^2 -  \frac{1}{2} \sum_{i = 1}^k |X_i | 
 \geq  
\frac{1}{2} k \left( \frac{n}{k} \right)^2 - \frac{n}{2} = \frac{n^2}{2k} - \frac{n}{2}.
\end{equation}
Let $f_n ( \{ b < a \} )$ be the number of Sidon 4-sets in $[n]$ that contain $\{b < a \}$.  By Lemma 
\ref{simpler pairs counting}, 
\begin{equation}\label{200}
f_n ( \{ b < a \} ) \geq \frac{n}{2} - 4.
\end{equation}
The sum $\sum_{ \{ b < a \} \in \mathcal{M} } 
 f_n( \{ b < a \} )$
  counts the number of Sidon 4-sets that contain at least one monochromatic pair.
 A given Sidon 4-set is counted at most six times by this sum since there are $\binom{4}{2}$ ways to 
 choose a pair from a Sidon 4-set.  In fact, the only Sidon 4-sets that will be counted six times 
 in this sum are those which are monochromatic under $c$.  All others will be counted at most three times.  Regardless,
 we have that the number of Sidon 4-sets that are not rainbow under $c$ is at least 
 \[
 \frac{1}{6} \sum_{ \{ b < a \} \in \mathcal{M} } 
 f_n( \{ b < a \} )
 \geq \frac{1}{6} \sum_{ \{ b < a \} \in \mathcal{M} }  \left( \frac{n}{2} - 4 \right)
  \geq \frac{1}{6} \left( \frac{n^2}{2k} - \frac{n}{2} \right) \left( \frac{n}{2} - 4 \right) 
  = \frac{n^3}{24k} - O_k(n^2)
 \]
where we have used both (\ref{100}) and (\ref{200}).  
\end{proof}


\section{An Upper Bound for four colors}\label{section 3}

For $k =4$, the upper bound of Theorem \ref{general upper bound for all k} gives 
\[
AR_{X+Y=Z+T}^4 (n) \leq \left( \frac{1}{12} - \frac{1}{96} \right) n^3 + O(n^2).
\]
In the special case that $k=4$, we can obtain a better upper bound with a different 
argument based on additive energy.

Let $A_1 , A_2 , \dots , A_t$ be finite sets of integers and define 
\[
E_t ( A_1 , A_2 , \dots , A_t ) = 
| \{ (a_1 , a_2 , \dots , a_t ) \in A_1 \times A_2 \times \dots \times A_t : 
a_1 + a_2 + \dots + a_t = 0 \} |.
\]
For integers $n \leq m$, write $[n,m]$ for the interval 
\[
\{n , n + 1 , n + 2 , \dots , m \}.
\]
For a finite set $J \subset \mathbb{Z}$ with $j$ elements, let 
\[
I(J)  = [  - \lceil j/2 \rceil , \lceil j/2 \rceil ].
\] 
Note that $I(J)$ depends only on the cardinality of $J$.

A key ingredient in the proof of our upper bound is the following result of Lev \cite{lev}.

\begin{theorem}[Lev \cite{lev}]\label{lev theorem}
Let $t \geq 2$ be an integer.  
For any finite sets $A_1 , A_2 , \dots , A_t \subset \mathbb{Z}$,  
\[
E_t ( A_1 , A_2 , \dots , A_t ) \leq E_t ( I(A_1) , I( A_2) , \dots ,  I (A_t ) ).
\]
\end{theorem}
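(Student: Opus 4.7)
The plan is to derive Theorem~\ref{lev theorem} from a discrete Riesz rearrangement inequality on $\mathbb{Z}$. First rewrite the count as an iterated convolution evaluated at the origin,
\[
E_t(A_1, \ldots, A_t) \;=\; \bigl(\mathbf{1}_{A_1} \ast \mathbf{1}_{A_2} \ast \cdots \ast \mathbf{1}_{A_t}\bigr)(0).
\]
Since $E_t$ is monotone under enlarging any coordinate, and since $I(A)$ is a symmetric interval centered at $0$ of size at least $|A|$, it suffices to establish the sharper inequality in which each $A_i$ is replaced by a symmetric decreasing rearrangement of the same cardinality.

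The central step is the discrete Riesz rearrangement inequality: for non-negative $f_1, \ldots, f_t \colon \mathbb{Z} \to \mathbb{R}_{\ge 0}$,
\[
\sum_{x_1 + \cdots + x_t = 0} \prod_{i=1}^{t} f_i(x_i) \;\leq\; \sum_{x_1 + \cdots + x_t = 0} \prod_{i=1}^{t} f_i^{\ast}(x_i),
\]
where $f_i^{\ast}$ is the symmetric decreasing rearrangement of $f_i$. Applied to indicator functions $f_i = \mathbf{1}_{A_i}$, for which $f_i^{\ast}$ is the indicator of a centered interval, this yields the desired bound.

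I would prove the Riesz inequality by induction on $t$. The base case $t = 2$ reads $|A_1 \cap (-A_2)| \leq \min(|A_1|, |A_2|)$, which is immediate and is achieved when both sets are centered symmetric intervals (one then sits inside the other). For the inductive step, symmetrize one coordinate at a time using the three-function form
\[
\sum_{a, b \in \mathbb{Z}} f(a) g(b) h(a + b) \;\leq\; \sum_{a, b \in \mathbb{Z}} f^{\ast}(a) g^{\ast}(b) h^{\ast}(a + b),
\]
with $h$ playing the role of the convolution of the already-symmetrized factors. Since convolutions of symmetric unimodal functions on $\mathbb{Z}$ are symmetric unimodal, one has $h^{\ast} = h$ and the induction proceeds.

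The main obstacle is the three-function step. By a layer-cake decomposition it reduces to the indicator case: for any $A, B \subset \mathbb{Z}$ and any symmetric interval $J$ centered at $0$, the count $|\{(a, b) \in A \times B : a + b \in J\}|$ is maximized when $A$ and $B$ are themselves centered symmetric intervals of the same cardinalities. This is the combinatorial core of the argument and would be handled by a compression/shifting argument: one swaps the most extreme element of $A$ (or $B$) inward to an unoccupied position in the centered interval and checks that the count does not decrease, exploiting the symmetry of $J$ to pair up the affected contributions. Verifying that each single swap is non-decreasing, through the small case analysis this requires, is where the real work lies; the rest of the proof is structural.
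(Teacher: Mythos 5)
A preliminary remark: the paper does not prove this statement at all --- it is quoted verbatim from Lev's paper \cite{lev} --- so there is no internal proof to compare against; your proposal has to stand on its own, and unfortunately it does not. The central inequality you reduce to, the ``exact'' discrete Riesz rearrangement bound
$\sum_{x_1+\cdots+x_t=0}\prod_i f_i(x_i)\le\sum_{x_1+\cdots+x_t=0}\prod_i f_i^{\ast}(x_i)$
with $f_i^{\ast}$ a symmetric decreasing rearrangement of the same cardinality, is false on $\mathbb{Z}$. Take $t=3$, $A_1=A_2=\{0,1\}$, $A_3=\{-1,0\}$. Then $E_3(A_1,A_2,A_3)=3$ (the triples $(0,0,0)$, $(0,1,-1)$, $(1,0,-1)$), whereas any fixed rearrangement convention sends all three two-element sets to the \emph{same} two-element set $S$ (either $\{0,1\}$ or $\{-1,0\}$), and $\#\{(x_1,x_2,x_3)\in S^3: x_1+x_2+x_3=0\}=1$. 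The culprit is parity: an even-sized set has no rearrangement that is genuinely symmetric about $0$, and in the ``sum equals zero'' form the reflection $-A_t$ interacts with the convention, so no choice of convention rescues the statement. Even your $t=2$ base case fails: with $A_1=\{0,1\}$, $A_2=\{-1,0\}$ one has $|A_1\cap(-A_2)|=2$, but after rearranging both to the same two-element set the intersection with the reflection has size $1$. Your inductive step inherits the same defect: $\mathbf{1}_{\{0,1\}}\ast\mathbf{1}_{\{0,1\}}$ is unimodal but centered at $1$, not at $0$, so $h^{\ast}\ne h$ and the shift by one is exactly what the counterexample exploits.

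This is not a fixable technicality in your framework but the very reason Lev's theorem is stated with the enlarged intervals $I(A)=[-\lceil |A|/2\rceil,\lceil |A|/2\rceil]$, which are symmetric about $0$, of odd size, and have at least $|A|+1$ elements; the extra element(s) absorb precisely the parity/reflection obstruction that kills the same-cardinality version. So the reduction ``it suffices to prove the sharper rearrangement inequality'' points in the wrong direction: the sharper statement is false, and any correct proof (including Lev's) must work with the slightly larger symmetric intervals, or otherwise quantify the loss incurred at each symmetrization step, rather than appeal to an exact discrete Riesz inequality. If you want to pursue a self-contained proof, the compression/shifting idea you sketch for the three-function step is a reasonable starting point, but it has to be set up so that the target objects are the intervals $I(A_i)$ themselves (allowing the cardinality to grow by one or two), and the bookkeeping of that enlargement through the induction on $t$ is the real content --- it is exactly what your outline currently assumes away.
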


The main idea is to apply Theorem \ref{lev theorem} with $t = 4$, where 
$A_1, A_2 , A_3,A_4$ are color classes of a coloring $c:[n] \rightarrow [4]$.   
Before using Theorem \ref{lev theorem}, we need a few lemmas.  

For finite sets $A , B \subset \mathbb{Z}$ and an integer $m$, let 
\[
r_{A + B} (m ) = 
| \{ ( a ,b) \in A \times B : a + b = m \} |.
\]
 
 \begin{lemma}\label{two sets}
 Let $1 \leq \alpha \leq \beta$ be integers.  If $A = [ - \alpha , \alpha ]$ 
 and $B = [- \beta , \beta ]$, then  
 \[
 r_{A + B } (m) = 
 \left\{
 \begin{array}{ll}
 2 \alpha + 1 & \mbox{if $|m| \leq \beta - \alpha$}, \\
 \beta + \alpha + 1 - |m| & \mbox{if $\beta - \alpha \leq |m| \leq \alpha + \beta$}, \\
 0 & \mbox{otherwise.}
 \end{array}
 \right.
 \]
In particular, $r_{A+B} (m ) \leq \beta + \alpha + 1 - | m |$ whenever $|m| \leq \alpha + \beta$.  
\end{lemma}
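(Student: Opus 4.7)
The plan is to reduce the problem to counting lattice points in the intersection of two intervals. For fixed $m$, a pair $(a,b) \in A \times B$ with $a + b = m$ is determined by $a$, and the pair is valid exactly when $b = m - a$ lies in $B = [-\beta, \beta]$, i.e., when $a \in [m - \beta, m + \beta]$. Therefore
\[
r_{A+B}(m) = \bigl| [-\alpha, \alpha] \cap [m - \beta, m + \beta] \bigr|.
\]
Both intervals are symmetric about their midpoints ($0$ and $m$ respectively), and the map $(a,b) \mapsto (-a,-b)$ is a bijection between solutions for $m$ and solutions for $-m$, so without loss of generality we may assume $m \geq 0$.

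Under the assumption $m \geq 0$, I would split into the three cases that correspond to the stated formula. If $0 \leq m \leq \beta - \alpha$, then $m - \beta \leq -\alpha$ and $m + \beta \geq \alpha$, so $[-\alpha,\alpha] \subset [m-\beta, m+\beta]$ and the intersection has exactly $2\alpha + 1$ integer points. If $\beta - \alpha \leq m \leq \alpha + \beta$, then $m - \beta \geq -\alpha$ but $m + \beta \geq \alpha$, so the intersection is $[m-\beta, \alpha]$, which contains $\alpha - (m - \beta) + 1 = \alpha + \beta + 1 - m$ integers, matching $\alpha + \beta + 1 - |m|$. Finally, if $m > \alpha + \beta$, then $m - \beta > \alpha$ and the intersection is empty.

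The ``in particular'' statement is immediate: in the first case $|m| \leq \beta - \alpha$ gives $2\alpha + 1 \leq \alpha + \beta + 1 - |m|$, while in the second case equality holds, and in the third case $r_{A+B}(m) = 0 \leq \alpha + \beta + 1 - |m|$ trivially (noting the hypothesis $|m| \leq \alpha + \beta$).

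There is no real obstacle here — the argument is a routine case analysis on the relative positions of the interval endpoints, and the only mild care needed is exploiting the symmetry $m \leftrightarrow -m$ to cut the casework in half and to get the absolute value in the final formula.
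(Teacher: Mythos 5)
Your proof is correct and follows essentially the same route as the paper's: a case analysis on $m$ relative to $\beta-\alpha$ and $\alpha+\beta$, with the symmetry $m \leftrightarrow -m$ handling negative $m$. Your reformulation $r_{A+B}(m) = \lvert [-\alpha,\alpha] \cap [m-\beta, m+\beta]\rvert$ is just a tidier packaging of the paper's explicit parametrization of solutions, and the ``in particular'' verification matches as well.
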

\begin{proof}
For any $m$ with $|m| \leq \beta - \alpha$, we can write $m = j + ( m  -  j )$ where $j \in [ - \alpha , \alpha ]$. 
The term $m-j$ is in $B$ since if $|m| \leq \beta - \alpha$ and $|j| \leq \alpha$, then 
\[
| m - j | \leq |m| + |j| \leq \beta - \alpha + \alpha = \beta .
\]
This shows that $r_{A + B }(m) = 2 \alpha + 1$ whenever $|m| \leq \beta - \alpha$.  

Now suppose $  \beta  - \alpha \leq m \leq \alpha + \beta$, say $m = \beta - \alpha + l$ 
for some $l \in \{ 0 , 1, \dots , 2 \alpha \}$.  
Then
\begin{equation}\label{expression for k}
m = \beta  - \alpha + l = ( - \alpha + l + t) + ( \beta - t )
\end{equation}
for $t \in \{0,1, \dots , 2 \alpha - l \}$.  We now check that for each such $t$, 
we have $- \alpha + l + t \in A$ and $\beta - t \in B$.  
Since $\alpha \leq \beta$ and $0 \leq l \leq 2 \alpha$, 
\[
- \beta \leq \beta - 2 \alpha \leq \beta - 2 \alpha + l = \beta - ( 2 \alpha - l )  \leq \beta - t \leq \beta
\]
so $| \beta - t | \leq \beta$ hence $\beta - t \in B$.
Similarly,
\[
- \alpha \leq - \alpha + l + t \leq - \alpha + l + 2 \alpha - l = \alpha
\]
so $- \alpha + l  + t \in A$.  
Therefore, in (\ref{expression for k}), the term $- \alpha + l + t$ belongs to $A$ and 
$\beta - t$ belongs to $B$.  Furthermore, this is all of the ways to write $m$ as a 
sum of an integer in $A$ and an integer in $B$.  We conclude that 
for $\beta - \alpha \leq m \leq \alpha + \beta$, $r_{ A + B}(m) = \beta + \alpha + 1 - m$.
The proof is completed by noting that if $m > \alpha + \beta$, then $r_{A + B}(m) = 0$, and 
$A$ and $B$ are symmetric about 0 so that $r_{A +B}(m) = r_{A+B}(-m)$.

As for the assertion that $r_{A + B} (m ) \leq \beta + \alpha  + 1 - |m|$
for $|m| \leq \alpha + \beta$, it is enough to check 
that $\beta + \alpha + 1 - |m| \geq 2 \alpha + 1$ for $|m| \leq \beta - \alpha$.  An easy  
computation shows that these two inequalities are equivalent.    
\end{proof}

\begin{lemma}\label{one set}
If $\alpha$ is a positive integer and $J  = [ - \alpha , \alpha ]$, 
then 
 \[
 r_{J + J} (m) = 
 \left\{
 \begin{array}{ll}
 2 \alpha + 1 - |m| & \mbox{if $|m| \leq 2 \alpha$}, \\
 0 & \mbox{otherwise.}
 \end{array}
 \right.
 \]
\end{lemma}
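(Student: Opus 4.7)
The plan is to obtain this lemma as a direct specialization of Lemma \ref{two sets} by taking $\alpha = \beta$, so that $A = B = J = [-\alpha,\alpha]$. The assertion of Lemma \ref{two sets} splits into three cases depending on where $|m|$ lies. With $\beta = \alpha$, the first case ($|m| \leq \beta - \alpha$) collapses to the single point $m = 0$, where the formula yields $r_{J+J}(0) = 2\alpha + 1$, and the second case ($\beta - \alpha \leq |m| \leq \alpha + \beta$) becomes $0 \leq |m| \leq 2\alpha$, where the formula yields $\beta + \alpha + 1 - |m| = 2\alpha + 1 - |m|$. These two expressions agree at $m = 0$, so they combine into the single formula $r_{J+J}(m) = 2\alpha + 1 - |m|$ for all $|m| \leq 2\alpha$. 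The third case ($|m| > \alpha + \beta = 2\alpha$) gives $r_{J+J}(m) = 0$, matching the claim.

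Alternatively, a self-contained proof is essentially the same counting done in Lemma \ref{two sets}. By the symmetry $r_{J+J}(m) = r_{J+J}(-m)$, it suffices to consider $m \geq 0$. Parametrize solutions as $(j_1, j_2) = (-\alpha + t,\, m + \alpha - t)$, and check that both coordinates lie in $[-\alpha, \alpha]$ precisely when $m \leq t \leq 2\alpha$, which yields exactly $2\alpha + 1 - m$ solutions (and none when $m > 2\alpha$).

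There is no genuine obstacle here: the statement is the $\alpha = \beta$ specialization of the previous lemma, so the only care needed is verifying that the two piecewise formulas in Lemma \ref{two sets} glue together consistently at $m = 0$ and that the boundary cases $|m| = 2\alpha$ and $|m| > 2\alpha$ are handled correctly. The whole proof is one or two sentences, amounting essentially to citing Lemma \ref{two sets}.
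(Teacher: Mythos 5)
Your proposal is correct and matches the paper's proof exactly: the paper also obtains this lemma by applying Lemma \ref{two sets} with $\alpha = \beta$. Your extra verification that the piecewise cases glue consistently at $m=0$ is a harmless elaboration of the same one-line argument.
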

\begin{proof}
Apply Lemma \ref{two sets} with $\alpha = \beta$.    
\end{proof}

\begin{lemma}\label{four sets}
Let $\alpha_1 , \alpha_2 , \alpha_3 , \alpha_4$ be positive integers such that 
$\alpha := \alpha_1 + \alpha_2 + \alpha_3 + \alpha_4 $ is divisible by 4.  
If $A_i = [ - \alpha_i , \alpha_i ]$ and $J = [ - \alpha / 4 , \alpha / 4]$, then for any integer $m$ with 
$|m| \leq \alpha /2$,  
\[
r_{A_1 + A_2} (m) + r_{A_3 + A_4} (m) \leq 2 r_{J+J} (m).
\]
\end{lemma}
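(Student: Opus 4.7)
The plan is to reduce the claim to a short casework by fully exploiting the symmetry of each of the sets. Since $A_1,A_2,A_3,A_4$ and $J$ are all intervals centered at $0$, each of $r_{A_i+A_j}(\cdot)$ and $r_{J+J}(\cdot)$ is an even function of $m$, so it suffices to prove the inequality for $m \geq 0$. We may also reorder within each pair so that $\alpha_1 \leq \alpha_2$ and $\alpha_3 \leq \alpha_4$. Under these reductions, Lemma \ref{two sets} gives the explicit trapezoidal formula: $r_{A_1+A_2}(m) = 2\alpha_1 + 1$ on $[0,\alpha_2-\alpha_1]$, $r_{A_1+A_2}(m) = \alpha_1 + \alpha_2 + 1 - m$ on $[\alpha_2-\alpha_1,\alpha_1+\alpha_2]$, and $r_{A_1+A_2}(m) = 0$ beyond, with the analogous expression for $r_{A_3+A_4}(m)$ in terms of $\alpha_3, \alpha_4$. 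By Lemma \ref{one set} applied with $\alpha/4$, we have $2r_{J+J}(m) = \alpha + 2 - 2m$ throughout $[0, \alpha/2]$.

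With these closed forms, the interval $[0,\alpha/2]$ is partitioned into at most five subintervals by the knots from $\{\alpha_2 - \alpha_1,\ \alpha_4 - \alpha_3,\ \alpha_1 + \alpha_2,\ \alpha_3 + \alpha_4\} \cap [0,\alpha/2]$. On each subinterval, both sides of the claimed inequality are affine functions of $m$, so it is enough to check the inequality at the two endpoints of each piece. In the ``plateau--plateau'' regime $m \leq \min(\alpha_2 - \alpha_1, \alpha_4 - \alpha_3)$, the inequality reduces to $2m \leq (\alpha_2 - \alpha_1) + (\alpha_4 - \alpha_3)$, which is immediate from the two defining upper bounds on $m$. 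In the ``mixed'' regimes, where one pair is in its linearly decreasing phase while the other is on its plateau, the inequality collapses to $m \leq \alpha_4 - \alpha_3$ (or its mirror image), which again follows from the subinterval bounds defining the case.

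The main obstacle is the ``one-vanishing'' regime, where (say) $m > \alpha_1 + \alpha_2$ forces $r_{A_1+A_2}(m) = 0$ while $r_{A_3+A_4}(m)$ is still positive. In this regime the claim reduces to showing that a single trapezoid value is dominated by $\alpha + 2 - 2m$, and here one must use the full strength of $m \leq \alpha/2$ together with the identity $\alpha = \alpha_1 + \alpha_2 + \alpha_3 + \alpha_4$ and the divisibility hypothesis $4 \mid \alpha$. The approach would be to combine the two bounds $r_{A_3+A_4}(m) \leq 2\alpha_3 + 1$ (from the plateau cap) and $r_{A_3+A_4}(m) \leq \alpha_3 + \alpha_4 + 1 - m$ (from the decreasing phase), and to select whichever is tighter given the value of $m$, so that the inequality against $\alpha + 2 - 2m$ becomes linear and reduces once more to an endpoint check.

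Finally, to unify the casework I would record the observation that within each piece the slope of the left-hand side is $0$, $-1$, or $-2$, while the slope of the right-hand side is always $-2$; hence on each piece whichever side decreases faster is determined in advance, and the endpoint that is binding is either the left or the right endpoint according to this slope comparison. This reduces the whole verification to checking a small number of explicit inequalities at the knots $\alpha_2 - \alpha_1$, $\alpha_4 - \alpha_3$, $\alpha_1 + \alpha_2$, $\alpha_3 + \alpha_4$, and $\alpha/2$, each of which is a direct consequence of the ordering of the $\alpha_i$ and the relation $\alpha_1 + \alpha_2 + \alpha_3 + \alpha_4 = \alpha$.
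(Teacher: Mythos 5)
Your piecewise-affine reduction works on the plateau--plateau, mixed, and both-decreasing pieces, but the ``one-vanishing'' regime that you correctly single out as the main obstacle cannot be closed by the device you sketch, because the inequality is simply false there. Take $\alpha_1=\alpha_2=1$, $\alpha_3=\alpha_4=5$, so $\alpha=12$, $J=[-3,3]$, and $m=5\le\alpha/2$. Then $r_{A_1+A_2}(5)=0$ and $r_{A_3+A_4}(5)=6$, while $2r_{J+J}(5)=2\cdot 2=4$; neither of your two candidate bounds, the plateau cap $2\alpha_3+1=11$ nor the decreasing-phase bound $\alpha_3+\alpha_4+1-m=6$, is at most $\alpha+2-2m=4$. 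So leaving that case with ``the approach would be to select whichever bound is tighter'' is a genuine gap: no choice between those bounds can succeed, and no endpoint check will rescue it.

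For comparison, the paper's proof is a one-line addition of the estimates $r_{A_1+A_2}(m)\le\alpha_1+\alpha_2+1-|m|$ and $r_{A_3+A_4}(m)\le\alpha_3+\alpha_4+1-|m|$ from Lemma \ref{two sets}; but that ``in particular'' bound is stated (and is only true) for $|m|\le\alpha_i+\alpha_j$, so the paper's argument silently assumes one is outside the one-vanishing regime as well --- your more careful case analysis has in effect exposed that the lemma as stated is too strong. What is true, and is all that Lemma \ref{super energy bound} actually uses, is the product inequality $r_{A_1+A_2}(m)\,r_{A_3+A_4}(m)\le r_{J+J}(m)^2$: if either factor vanishes this is trivial, and if both are nonzero then $|m|\le\alpha_1+\alpha_2$ and $|m|\le\alpha_3+\alpha_4$, so the additive estimate (your both-decreasing and valid mixed cases, or the paper's one line) applies and AM--GM finishes. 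The correct repair is therefore either to add the hypothesis $|m|\le\min(\alpha_1+\alpha_2,\alpha_3+\alpha_4)$ to the sum inequality, or to restate the lemma directly in product form; with that amendment your casework (which then has no one-vanishing regime) goes through.
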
  
\begin{proof}
Let $m$ be an integer with $|m| \leq \frac{\alpha}{2}$.  
By Lemma \ref{two sets}, 
\begin{eqnarray*}
r_{A_1 +A_2}(m)  + r_{A_3 + A_4}(m) & \leq & 
\alpha_1 + \alpha_2 + 1 - |m| + \alpha_3 + \alpha_4 + 1 - |m|  \\
& = & 2 ( \alpha /2 + 1 - |m| ) = 2 r_{J+J} (m).
\end{eqnarray*}
For the last equality, we have used Lemma \ref{one set} with $J = [ - \alpha /4 , \alpha / 4]$.   
\end{proof}

\begin{lemma}\label{super energy bound}
Let $\alpha_1 , \alpha_2 , \alpha_3 , \alpha_4$ be positive integers such that 
$\alpha := \alpha_1 + \alpha_2 + \alpha_3 + \alpha_4 $ is divisible by 4.  
If $A_i = [ - \alpha_i , \alpha_i ]$ and $J = [ - \alpha / 4 , \alpha / 4]$, then 
\[
\sum_{ m \in \mathbb{Z} } r_{A_1 + A_2}(m) r_{A_3 + A_4}(m) 
\leq 
\sum_{m= - \frac{\alpha}{2}}^{ \frac{\alpha}{2}}  r_{J + J } (m)^2.
\]
\end{lemma}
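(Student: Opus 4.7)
The plan is to combine Lemma \ref{four sets} with a straightforward AM-GM step and a support argument, and to show that the full infinite sum on the left reduces to a sum over $|m| \leq \alpha/2$.

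First I would observe that the summand $r_{A_1+A_2}(m)\, r_{A_3+A_4}(m)$ vanishes outside the range $|m| \leq \alpha/2$. Indeed, $r_{A_1+A_2}(m)$ is supported on $|m|\leq \alpha_1+\alpha_2$ and $r_{A_3+A_4}(m)$ is supported on $|m|\leq \alpha_3+\alpha_4$, so for the product to be nonzero we need $|m| \leq \min(\alpha_1+\alpha_2,\alpha_3+\alpha_4)$. Since $(\alpha_1+\alpha_2)+(\alpha_3+\alpha_4)=\alpha$, this minimum is at most $\alpha/2$. Hence the left-hand side equals the truncated sum $\sum_{|m|\leq \alpha/2} r_{A_1+A_2}(m)\, r_{A_3+A_4}(m)$.

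Next, for each $m$ with $|m|\leq \alpha/2$ I would apply Lemma \ref{four sets} to get
\[
r_{A_1+A_2}(m) + r_{A_3+A_4}(m) \leq 2\, r_{J+J}(m),
\]
and then use the AM-GM inequality in the form $xy \leq \bigl(\tfrac{x+y}{2}\bigr)^2$ to conclude
\[
r_{A_1+A_2}(m)\, r_{A_3+A_4}(m) \leq \left(\frac{r_{A_1+A_2}(m)+r_{A_3+A_4}(m)}{2}\right)^2 \leq r_{J+J}(m)^2.
\]
Summing this pointwise bound over $|m|\leq \alpha/2$ gives the desired inequality.

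I do not expect any real obstacle here: the only subtleties are verifying that the product's support sits inside $[-\alpha/2,\alpha/2]$ (which follows immediately from the pigeonhole-style observation above) and noting that Lemma \ref{four sets} is only stated in the regime $|m|\leq \alpha/2$, which is exactly the regime in which we need it. The rest is just AM-GM applied termwise and summing.
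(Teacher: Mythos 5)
Your proposal is correct and follows essentially the same route as the paper: restrict the sum to $|m|\leq \alpha/2$ via the support of the two representation functions, then apply Lemma \ref{four sets} together with the AM--GM step (which the paper uses implicitly when passing from the bound on the sum to the bound on the product). No gaps.
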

\begin{proof}
First we show that if $|m| > \frac{\alpha}{2}$, then the product 
\[
r_{A_1 + A_2} (m) r_{A_3 + A_4 } (m)
\]
must be 0.  If $r_{A_1 + A_2}(m) \neq 0$ and 
$r_{A_3 + A_4 }(m) \neq 0$, then by Lemma \ref{two sets}, 
\begin{center}
$|m| \leq \alpha_1 + \alpha_2$~~ and~~ 
$|m| \leq \alpha_3 + \alpha_4$.
\end{center}
Adding the two inequalities together gives $|m| \leq \frac{\alpha_1 + \alpha_2 + \alpha_3 + \alpha_4}{2} $ and 
so $|m| \leq \frac{ \alpha }{2}$.   
Thus, 
\[
\sum_{ m \in \mathbb{Z} } r_{A_1 + A_2}(m) r_{A_3 + A_4}(m)  = 
\sum_{m = - \frac{\alpha}{2} }^{ \frac{\alpha}{2} } r_{A_1 + A_2}(m) r_{A_3 + A_4}(m) .
\]
By Lemma \ref{four sets}, for any $m$ with $|m| \leq \frac{\alpha}{2}$, 
we have $r_{A_1 + A_2}(m)+  r_{A_3 + A_4}(m) 
\leq 2  r_{J + J } (m)$.
Thus, the product $r_{A_1+A_2}(m) r_{A_3 + A_4}(m)$ is at most $r_{J + J}(m)^2$.  Since 
this holds for all $m$ with $|m| \leq \frac{\alpha}{2}$, 
\[
\sum_{ m \in \mathbb{Z} } r_{A_1 + A_2}(m) r_{A_3 + A_4}(m)  =  
\sum_{m = - \frac{\alpha}{2} }^{ \frac{\alpha}{2} } r_{A_1 + A_2}(m) r_{A_3 + A_4}(m)  
 \leq  
\sum_{m= - \frac{\alpha}{2} }^{ \frac{\alpha}{2} }  r_{J + J } (m)^2
\]
which completes the proof of the lemma.
\end{proof}

\begin{lemma}\label{energy count}
If $\alpha$ is a positive integer and $J = [ - \alpha  , \alpha ]$, then  
\[
E_4 ( J,J,J,J ) = \frac{16 \alpha^3}{3} + 8 \alpha^2 + \frac{14 \alpha }{3} + 1.
\]
\end{lemma}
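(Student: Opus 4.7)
The plan is to express $E_4(J,J,J,J)$ as a convolution-type sum of the representation function $r_{J+J}$, then use Lemma \ref{one set} to reduce everything to a sum of squares that evaluates in closed form.

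First I would rewrite the energy by splitting the four-tuple $(a_1,a_2,a_3,a_4)$ into two pairs. For every integer $m$, the number of $(a_1,a_2) \in J^2$ with $a_1+a_2 = m$ is $r_{J+J}(m)$, and the number of $(a_3,a_4) \in J^2$ with $a_3 + a_4 = -m$ is $r_{J+J}(-m) = r_{J+J}(m)$ by symmetry of $J$. Summing over $m$ gives
\[
E_4(J,J,J,J) = \sum_{m \in \mathbb{Z}} r_{J+J}(m)^2.
\]
By Lemma \ref{one set}, $r_{J+J}(m) = 2\alpha + 1 - |m|$ for $|m| \leq 2\alpha$ and $0$ otherwise, so the sum has finite support:
\[
E_4(J,J,J,J) = \sum_{m = -2\alpha}^{2\alpha} (2\alpha + 1 - |m|)^2.
\]

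Next I would evaluate the sum. Separating the $m=0$ term and using the symmetry $|m|$,
\[
\sum_{m = -2\alpha}^{2\alpha} (2\alpha + 1 - |m|)^2 = (2\alpha+1)^2 + 2 \sum_{m=1}^{2\alpha} (2\alpha + 1 - m)^2,
\]
and the reindexing $j = 2\alpha + 1 - m$ turns the inner sum into $\sum_{j=1}^{2\alpha} j^2 = \frac{2\alpha(2\alpha+1)(4\alpha+1)}{6}$. Plugging this in and simplifying gives a polynomial in $\alpha$ which, after combining $(2\alpha+1)^2 = 4\alpha^2 + 4\alpha + 1$ with $\frac{2\alpha(2\alpha+1)(4\alpha+1)}{3}$, should collapse to $\frac{16\alpha^3}{3} + 8\alpha^2 + \frac{14\alpha}{3} + 1$.

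There is essentially no obstacle here; the only thing to be careful about is not double-counting the $m=0$ term and keeping track of the factor of $2$ when exploiting the symmetry $r_{J+J}(m) = r_{J+J}(-m)$. The entire argument is a short reduction to the standard closed form for $\sum_{j=1}^{N} j^2$.
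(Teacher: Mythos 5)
Your proposal is correct and follows essentially the same argument as the paper: split the four-tuple into two pairs, use $\sum_m r_{J+J}(m) r_{J+J}(-m) = \sum_m r_{J+J}(m)^2$ together with Lemma \ref{one set}, and evaluate the resulting sum of squares. The closed-form computation checks out, so nothing further is needed.
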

\begin{proof}
We must count the number of 4-tuples $(x_1 , x_2, x_3 , x_4)$ with $ - \alpha \leq x_i \leq \alpha$ 
and 
\[
x_1 + x_2 + x_3 + x_4 = 0.
\]
For an integer $m$ with $ 0 \leq |m| \leq 2 \alpha $, we have $r_{J+J} (m) = 2 \alpha + 1 - |m|$
by Lemma \ref{one set}.  
The number of 
4-tuples $(x_1 , x_2, x_3 , x_4) $ with $ - \alpha \leq x_i \leq \alpha$  and 
$x_1 + x_2 + x_3 + x_4 = 0$ is 
\begin{eqnarray*}
\sum_{m = - 2 \alpha }^{ 2 \alpha } r_{J+J} (m) r_{J + J} (-m) 
& = &  r_{J+J} (0)^2  + 2 \sum_{ m = 1}^{ 2 \alpha } r_{J+J} (m)^2  \\
&=&  \left( 2 \alpha + 1 \right)^2 + 2 \sum_{ m  =1}^{ 2 \alpha } \left( 2 \alpha + 1 - m \right)^2 
 \\
  &= &\frac{16 \alpha^3}{3} + 8 \alpha^2 + \frac{14 \alpha }{3} + 1.
\end{eqnarray*}
\end{proof}

\begin{theorem}\label{newest ub}
The function $AR_{X + Y = Z + T}^4 (n)$ satisfies 
\[
AR_{X+Y = Z+T}^4(n) \leq \frac{3n^3}{96} + O(n^2).
\]
\end{theorem}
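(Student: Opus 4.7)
Fix a coloring $c:[n]\to[4]$ with color classes $X_1,X_2,X_3,X_4$, and let $R=R(c)$ denote the number of rainbow Sidon $4$-sets under $c$. The strategy is to rewrite $R$ as a sum of three additive-energy quantities, apply Lev's theorem (Theorem~\ref{lev theorem}) to pass to symmetric integer intervals, and then invoke Lemmas~\ref{super energy bound} and~\ref{energy count} to finish.

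For each of the three unordered partitions $P=\{\{a,b\},\{c,d\}\}$ of $[4]$ into two pairs, set
\[
E_P \;=\; \bigl|\{(p,q,r,s)\in X_a\times X_b\times X_c\times X_d : p+q=r+s\}\bigr|
\]
(swapping entries within a pair leaves the count unchanged). The key combinatorial identity is
\[
R \;=\; E_{\{\{1,2\},\{3,4\}\}} \;+\; E_{\{\{1,3\},\{2,4\}\}} \;+\; E_{\{\{1,4\},\{2,3\}\}}.
\]
Indeed, a rainbow Sidon $4$-set $\{y_1>y_2>y_3>y_4\}$ with $y_1+y_4=y_2+y_3$ determines a partition $P_0$ obtained by pairing the colors of the two extremes $\{y_1,y_4\}$ and the colors of the two middles $\{y_2,y_3\}$, and contributes exactly one tuple to $E_{P_0}$ (extremes on the left, middles on the right). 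For either of the other two partitions, a contribution would force $y_1+y_2=y_3+y_4$ or $y_1+y_3=y_2+y_4$, and both are ruled out by the strict chain $y_1+y_2>y_1+y_3>y_1+y_4=y_2+y_3>y_2+y_4>y_3+y_4$.

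Next, write $E_P=E_4(X_a,X_b,-X_c,-X_d)$; since $E_4$ is symmetric in its arguments and $I(-X_i)=I(X_i)$ (because $I(J)$ depends only on $|J|$), Theorem~\ref{lev theorem} provides the uniform bound
\[
E_P \;\le\; E_4(A_1,A_2,A_3,A_4), \qquad A_i := I(X_i) = [-\beta_i,\beta_i], \;\; \beta_i = \lceil |X_i|/2\rceil,
\]
so $R\le 3\,E_4(A_1,A_2,A_3,A_4)$. If $\sum\beta_i$ is not divisible by $4$, enlarge one of the $A_i$ to $\tilde A_i=[-\tilde\beta_i,\tilde\beta_i]\supseteq A_i$ so that $\alpha:=\sum\tilde\beta_i$ becomes a multiple of $4$; this costs $O(1)$ in $\alpha$ and hence $O(n^2)$ in $E_4$. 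With $\alpha\le n/2+O(1)$ and $J=[-\alpha/4,\alpha/4]$, Lemma~\ref{super energy bound} gives $E_4(\tilde A_1,\tilde A_2,\tilde A_3,\tilde A_4)\le E_4(J,J,J,J)$, and Lemma~\ref{energy count} evaluates this as $\tfrac{16(\alpha/4)^3}{3}+O(\alpha^2)\le \tfrac{n^3}{96}+O(n^2)$. Combining the three steps yields $R\le \tfrac{3n^3}{96}+O(n^2)$.

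The main technical point is the identity for $R$: the strict inequalities among the six pair-sums arising from a Sidon $4$-set rule out contributions from two of the three candidate partitions, turning a potential upper bound into an exact equality and producing the factor of $3$ cleanly. The rest is a direct chaining of Lev's theorem with the interval lemmas already proved, with only a small padding to arrange the divisibility-by-$4$ hypothesis of Lemma~\ref{super energy bound}.
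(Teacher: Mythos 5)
Your proposal is correct and follows essentially the same route as the paper: expressing the rainbow count exactly as the sum of three energies $E_4(X_a,X_b,-X_c,-X_d)$ over the pair-partitions of the colors, applying Lev's theorem to pass to symmetric intervals, and finishing with Lemmas \ref{super energy bound} and \ref{energy count}. Your treatment of the rounding/divisibility issues (padding the intervals) differs only cosmetically from the paper's device of assuming $8 \mid n$ and enlarging $J$, and your justification of the exact identity via the strict ordering of pair-sums is a slightly more detailed version of what the paper asserts.
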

\begin{proof}
First we assume that $n$ is divisible by 8.  An easy monotonicity argument will complete the proof for all $n$.  

Suppose $c: [n] \rightarrow \{1,2,3,4 \}$ is a 4-coloring of $[n]$. Let $X_i$ be the integers 
assigned color $i$ by $c$ and $|X_i| = c_i n$.  
The number of rainbow solutions to $X + Y = Z + T$ is exactly 
\[
N(c) := E_4 (X_1 , X_2 , - X_3 , -X_4 ) + E_4 ( X_1 , X_3 , -X_2 , -X_4 ) + E_4 ( X_1 , X_4 , - X_2 , - X_3).
\]
By Theorem \ref{lev theorem},  
\begin{eqnarray*}
N(c) & \leq & E_4 (I(X_1) , I(X_2) , I(- X_3) , I(-X_4) ) + E_4 ( I(X_1) , I(X_3) , I(-X_2) , I(-X_4) ) \\
&+& E_4 ( I(X_1) , I( X_4 ) , I( -X_2 ) , I( -X_3 )).
\end{eqnarray*}
We will show that each of the terms on the right hand side is at most $\frac{n^3}{96} + O(n^2)$.  

For $1 \leq i \leq 4$, 
\[
I( \pm X_i) = [  - \lceil c_i  n/ 2 \rceil , \lceil c_i n / 2 \rceil ].
\]
We also have that 
$c_1  + c_2 + c_3 + c_4 \leq 1$. 
Assume that each $\frac{c_i n }{2}$ is an integer.
Let $A_1 = I(X_1)$, $A_2 = I(X_2)$, $A_3 = I(-X_3)$, $A_4 = I( - X_4)$, and 
$J= [-n/8 , n/8]$.        
By Lemmas \ref{super energy bound} and \ref{energy count}
\begin{eqnarray*}
E_4 (A_1 , A_2 , A_3 , A_4) & = & \sum_{m \in \mathbb{Z} } r_{A_1 + A_2 }(m) r_{A_3 + A_4}(-m) 
= \sum_{m \in \mathbb{Z} } r_{A_1 + A_2 }(m) r_{A_3 + A_4}(m)  \\
& \leq & \sum_{m = -n/4 }^{ n/4} r_{J + J}(m)^2 = 
E_4 (J,J,J,J) = \frac{n^3}{96}  + O(n^2).
\end{eqnarray*}
We apply this same estimate to 
$E_4 ( X_1 , X_3 , -X_2 , -X_4 )$ and $E_4 ( X_1 , X_4 , - X_2 , - X_3)$
to obtain
\[
N(c) \leq \frac{3n^3}{96} + O(n^2).
\]
If the $\frac{c_i n}{2}$ are not integers, we can still apply the above argument but now $J$ must be 
replaced with $J = [ - n/8 -1 , n/8 + 1]$.  Nevertheless, we still have 
$E_4 (J,J,J,J) \leq \frac{n^3}{96} + O(n^2)$ as $E_4 (J,J,J,J)$ increases by $O(n^2)$ when the 
interval $J$ is increases from $[-n/8 , n/8]$ to $[-n/8 - 1 , n/8 + 1]$.  

If $n$ is not divisible by $8$, then let 
$l$ be the smallest integer for which $n + l$ is divisible by 8 (so $1 \leq l \leq 7$). 
By monotonicity,
\[
AR_{X  + Y = Z + T}^4(n ) \leq AR_{X  + Y = Z + T}^4(n + l) \leq \frac{3( n+l)^3 }{96} + O ( (n + l )^2 ) 
 = \frac{3n^3}{96} + O(n^2).
 \]
\end{proof}


\section{A Lower Bound for $k$ colors}\label{section 4}

In this section we prove a lower bound on $AR_{X+Y = S + T}^k (n)$ for $ k \geq 4$.  
We will need two lemmas before proving the lower bound.  In this section, 
we continue to write 
\[
r_{A + B}(m) = | \{ (a,b ) \in A \times B : a + b = m \} |.
\]

\begin{lemma}\label{another new lemma}
Let $1 \leq i < j \leq k$ be integers and let $n$ be a positive integer that is divisible by $k$.  
If $X_i = \{ m \in [n] : m \equiv i ( \textup{mod}~k) \}$ and 
$X_j = \{ m \in [n] : m \equiv j ( \textup{mod}~k) \}$,
then 
\[
r_{X_i + X_j }( i  + j + tk ) \geq
\left\{
\begin{array}{ll}
t +1 & \mbox{if $0 \leq t \leq \frac{n}{k} - 1$,} \\
\frac{2n}{k} - 1 - t  & \mbox{if $\frac{n}{k} \leq t \leq \frac{2n}{k} - 2$.}
\end{array}
\right.
\]
If $l \not\equiv i + j ( \textup{mod}~k)$, then $r_{X_i + X_j} (l ) = 0$. 
\end{lemma}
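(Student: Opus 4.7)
The plan is to make the structure of $X_i$ and $X_j$ explicit as arithmetic progressions of common difference $k$, and then count representations by reducing to a one-variable lattice count. Since $n$ is divisible by $k$, and residues are taken from $\{1, 2, \dots, k\}$ (with $k$ playing the role of $0$), each element of $X_i$ can be written uniquely as $i + \alpha k$ with $\alpha \in \{0, 1, \dots, n/k - 1\}$, and similarly each element of $X_j$ as $j + \beta k$ with $\beta \in \{0, 1, \dots, n/k - 1\}$. A sum $a + b$ with $a \in X_i$ and $b \in X_j$ therefore has the form
\[
a + b = (i + j) + (\alpha + \beta) k.
\]
This immediately establishes the last claim of the lemma: if $l \not\equiv i + j \pmod{k}$, then $l$ cannot be represented as such a sum, so $r_{X_i + X_j}(l) = 0$.

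For the main estimate, fix $m = i + j + tk$. The representations of $m$ as $a + b$ with $a \in X_i$, $b \in X_j$ correspond bijectively to pairs $(\alpha, \beta) \in \{0, 1, \dots, n/k - 1\}^2$ satisfying $\alpha + \beta = t$. Counting such pairs is elementary. When $0 \le t \le n/k - 1$, the constraint $\beta = t - \alpha \ge 0$ forces $\alpha \in \{0, 1, \dots, t\}$, and all these values automatically satisfy $\alpha, \beta \le n/k - 1$; this yields exactly $t + 1$ pairs. When $n/k \le t \le 2n/k - 2$, the binding constraints are $\alpha \le n/k - 1$ and $\beta = t - \alpha \le n/k - 1$, which give $\alpha \in \{t - n/k + 1, \dots, n/k - 1\}$ and hence $2n/k - 1 - t$ pairs.

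In fact this gives equality in both cases, so the stated lower bound is immediate. The only point requiring minor care is the uniform treatment of the case where $i = k$ or $j = k$ (residue class $0$), but the parameterization $i + \alpha k$ with $\alpha \in \{0, 1, \dots, n/k - 1\}$ handles this without modification since $n$ is divisible by $k$ and hence $k + (n/k - 1)k = n \in X_k$. There is no real obstacle here; the lemma is a direct counting exercise that sets up the additive structure to be exploited in the lower bound construction of Section \ref{section 4}.
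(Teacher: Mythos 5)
Your proof is correct and follows essentially the same route as the paper: both exploit the arithmetic-progression structure of $X_i$ and $X_j$ and count decompositions $i+j+tk = (i+\alpha k)+(j+\beta k)$ with $\alpha+\beta = t$ and $0 \le \alpha,\beta \le \frac{n}{k}-1$. Your bijective lattice-point formulation even yields equality rather than just the stated lower bound, but the underlying counting is the same.
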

\begin{proof}
First note that since $k$ divides $n$, 
\begin{center}
$X_i = \{ i , i + k , i + 2k , \dots , i + n - k \}$ ~~and~~ 
$X_j = \{ j , j + k , j + 2k , \dots , j + n - k \}$.
\end{center}

If $l = a + b$ for some $a \in X_i$ and $b \in X_j$, then $l \equiv i + j ( \textup{mod}~k)$.  
Thus, $r_{X_i + X_j }(l) = 0$ whenever $l \not\equiv i + j ( \textup{mod}~k)$. 
This proves the last assertion of the lemma.  

Let $t$ be an integer with $0 \leq t \leq \frac{n}{k} - 1$.  
We claim that for each $\alpha \in \{0,1, \dots , t \}$, we get
\[
i + j + tk = (  i + \alpha k ) + ( j + ( t - \alpha ) k )
\]
where $i + \alpha k \in X_i$ and $j + (t - \alpha ) k \in X_j$.
The inequality 
\[
i \leq i + \alpha k \leq i + tk \leq i + \left( \frac{n}{k} -1 \right) k = i + n - k
\]
shows that $i + \alpha k \in X_i$ for each $\alpha \in \{0,1, \dots , t \}$.
Similarly, 
\[
j \leq j + ( t - \alpha ) k \leq j + tk \leq j + \left( \frac{n}{k} -1 \right) k  = j + n - k 
\]
shows that $j + \alpha k \in X_j$ for each $\alpha \in \{0,1, \dots , t \}$.  
Consequently,
\[
r_{X_i + X_j } ( i + j + tk ) \geq t+1
\]
whenever 
$t \in \{0,1, \dots , \frac{n}{k} - 1 \}$. 

Now let $t$ be an integer with $\frac{n}{k} \leq t \leq \frac{2n}{k} - 2$.  
Write $t = \frac{2n}{k} - \beta$ where $2 \leq \beta \leq \frac{n}{k}$.  
For each $\alpha \in \{1,2 , \dots , \beta   - 1 \}$, we can write 
\[
i + j + tk = i + j + \left( \frac{2n}{k} - \beta \right) k = 
\left( i + \left( \frac{n}{k} - \alpha \right)k \right) + 
\left( j + \left( \frac{n}{k} - ( \beta - \alpha ) \right)k \right).
\]
We claim that $i + \left( \frac{n}{k} - \alpha \right) k \in X_i$ and 
$j + \left( \frac{n}{k} - ( \beta - \alpha ) \right)k \in X_j$.  
Now 
\[
i + k = 
i + n - \left( \frac{n}{k} -1 \right) k 
\leq 
i + n - ( \beta - 1 ) k \leq 
i + \left( \frac{n}{k} - \alpha \right) k \leq i + n - k
\]
where we have used the inequalities $\beta \leq \frac{n}{k}$, $\alpha \leq \beta -1$, and 
$\alpha \geq 1$.  
We conclude that for each $\alpha \in \{1,2, \dots , \beta - 1 \}$, the term 
$i + \left( \frac{n}{k} - \alpha \right)k$ is in $X_i$.  
Similarly, 
\begin{eqnarray*}
j + k & = & j + \left( \frac{n}{k} - \left( \frac{n}{k} - 1 \right) \right) k 
\leq 
j + \left( \frac{n}{k} - ( \beta - 1 ) \right) k 
\leq j + \left( \frac{n}{k} - ( \beta - \alpha ) \right)k \\
& \leq & j + \left( \frac{n}{k} - 1 \right) k = j + n - k 
\end{eqnarray*}
shows that $j + \left( \frac{n}{k} - ( \beta - \alpha ) \right)k$ is in $X_j$ for 
each $\alpha \in \{1,2, \dots , \beta  - 1 \}$. 
Therefore, 
\[
r_{X_i + X_j} ( i  + j + t k ) \geq \beta - 1.  
\]
Since 
$t = \frac{2n}{k} - \beta$, we have $\beta - 1 = \frac{2n}{k} - t - 1$ and this completes the proof of the lemma.
\end{proof}

\bigskip

For the next lemma we will count Sidon 4-sets in $\mathbb{Z}_k$.  
A \emph{Sidon 4-set in $\mathbb{Z}_k$} is a set of four distinct elements 
$\alpha$, $\beta$, $\gamma$, $\delta \in \mathbb{Z}_k$ such 
that $\alpha  + \beta \equiv \gamma + \delta ( \textup{mod}~k)$.
We will denote such a 4-set by $\{ \alpha + \beta \equiv \gamma + \delta \}$.
The reason we cannot simply write $\{ \alpha , \beta , \gamma , \delta \}$ is that 
in $\mathbb{Z}_k$, four distinct residues may lead to more than one solution to the Sidon equation.
For example, in $\mathbb{Z}_4$, 
\begin{center}
$1 + 2 \equiv 3 + 4 ( \textup{mod}~4)$ ~~and~~ $1 + 4 \equiv 2 + 3 ( \textup{mod}~4)$.
\end{center}
This does not occur in $\mathbb{Z}$ because of the ordering of the integers.  
Let $\mathcal{S}(k)$ be the collection of all Sidon 4-sets in $\mathbb{Z}_k$.
Finishing off the example of $k = 4$, it is easily seen that 
\begin{equation}\label{sidon in z4}
\mathcal{S}(4) = \{ \{ 1 + 2 \equiv 3 + 4 \} , \{ 1 + 4 \equiv 2 + 3 \} \}
\end{equation}
and so $| \mathcal{S}(4) | = 2$.  

We are now ready to state and prove the next lemma.  

\begin{lemma}\label{zk lemma}
Let $k \geq 4$ be an integer.  If $\mathcal{S}(k)$ is the family of all 
Sidon 4-sets in $\mathbb{Z}_k$, then 
\[
| \mathcal{S} (k) | = \frac{ k^3}{8}  - \frac{k^2}{2} + \theta k
\]
where $\theta = \frac{1}{2}$ if $k$ is even, and $\theta = \frac{3}{8}$ if $k$ is odd.  
\end{lemma}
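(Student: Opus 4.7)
The plan is to count elements of $\mathcal{S}(k)$ by grouping them according to the common sum $m \equiv \alpha+\beta \equiv \gamma+\delta \,(\textup{mod}~k)$. For each $m \in \mathbb{Z}_k$, let $P(m)$ denote the number of unordered pairs $\{x,y\}$ of distinct elements of $\mathbb{Z}_k$ with $x+y \equiv m \,(\textup{mod}~k)$. The key observation is that if $\{\alpha,\beta\}$ and $\{\gamma,\delta\}$ are two distinct pairs both summing to $m$, then they are automatically disjoint, since sharing a single element would force the other two to be equal and hence the pairs to coincide. Thus any two distinct pairs summing to the same $m$ yield a Sidon 4-set $\{\alpha+\beta \equiv \gamma+\delta\}$, and conversely every element of $\mathcal{S}(k)$ arises this way from its unique partition into two pairs. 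Therefore
$$|\mathcal{S}(k)| = \sum_{m \in \mathbb{Z}_k} \binom{P(m)}{2}.$$

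To evaluate $P(m)$, I count the $k$ ordered pairs $(x,y) \in \mathbb{Z}_k^2$ with $x+y \equiv m \,(\textup{mod}~k)$ and subtract those with $x = y$, which correspond to solutions of $2x \equiv m \,(\textup{mod}~k)$; the number of such solutions is $\gcd(2,k)$ when $\gcd(2,k) \mid m$ and zero otherwise. When $k$ is odd, this yields $P(m) = (k-1)/2$ for every $m$. When $k$ is even, $P(m) = (k-2)/2$ if $m$ is even and $P(m) = k/2$ if $m$ is odd, and $\mathbb{Z}_k$ contains exactly $k/2$ residues of each parity.

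Finally I substitute into the sum. When $k$ is odd,
$$|\mathcal{S}(k)| = k \binom{(k-1)/2}{2} = \frac{k(k-1)(k-3)}{8} = \frac{k^3}{8} - \frac{k^2}{2} + \frac{3k}{8},$$
yielding $\theta = 3/8$. When $k$ is even,
$$|\mathcal{S}(k)| = \frac{k}{2}\binom{(k-2)/2}{2} + \frac{k}{2}\binom{k/2}{2} = \frac{k(k-2)^2}{8} = \frac{k^3}{8} - \frac{k^2}{2} + \frac{k}{2},$$
yielding $\theta = 1/2$. As a sanity check, specializing to $k=4$ gives $|\mathcal{S}(4)| = 2$, matching the example in (\ref{sidon in z4}).

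The only genuinely nontrivial step is the disjointness observation, which is what lets the count collapse to a clean binomial sum; after that the argument is purely mechanical and the main thing to keep straight is the parity bookkeeping for even $k$. I do not expect a serious obstacle beyond that.
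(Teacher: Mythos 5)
Your proof is correct, and it reaches the paper's formula by a slightly different organization of what is at bottom the same count. The paper first picks an unordered pair $\{x_1,x_2\}$ (one side of the equation), counts the admissible choices of $x_3$ by excluding $x_1$, $x_2$ and the solutions of $2X \equiv x_1+x_2 \pmod{k}$, and then divides by $4$ to correct for the overcounting coming from which pair was chosen first and which element of the second pair was named $x_3$; your argument instead fixes the common sum $m$, observes that distinct pairs with the same sum are automatically disjoint, and writes $|\mathcal{S}(k)| = \sum_{m} \binom{P(m)}{2}$ with $P(m)$ the number of unordered two-element zero-free... rather, distinct-element pairs summing to $m$. Both proofs hinge on exactly the same arithmetic fact --- the number of solutions of $2x \equiv m \pmod{k}$ is $\gcd(2,k)$ or $0$ according to whether $\gcd(2,k)$ divides $m$ --- and on the same parity bookkeeping for even $k$; what your version buys is that the symmetry factor $4$ and the ``counted exactly four times'' discussion disappear into the binomial coefficient, making the final summation mechanical, while the paper's version makes the exclusions ($x_3 \notin \{x_1,x_2,y_1,y_2\}$) more explicit. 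Your disjointness observation plays the role of the paper's remark that no solution $y_i$ of $2X \equiv x_1+x_2$ can coincide with $x_1$ or $x_2$, so nothing essential is missing; the computations in both parity cases check out and agree with the $k=4$ example.
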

\begin{proof}
For this lemma, we will write $a \equiv b$ for $a \equiv b ( \textup{mod}~k)$.  

Let us first assume that $k$ is even.  Where this will come into play is that when $k$ is even,
the congruence $2X \equiv b$  will have exactly two solutions when $b$ is even, and no solutions 
when $b$ is odd.  
First we choose a pair $\{x_1 , x_2 \} \in \binom{ \mathbb{Z}_k } {2}$.  This can be done in $\binom{k}{2}$ ways
and this pair will be one side of the equation $X + Y \equiv Z + T$.  
Our counting from this point forward depends on if $x_1 + x_2$ is even or odd when viewed as an integer.

\smallskip
\noindent
\textit{Case 1: $x_1 + x_2$ is even}

\smallskip
If $x_1 + x_2$ is even, then the congruence 
$2X \equiv x_1 + x_2$ has exactly two solutions, say $y_1$ and $y_2$.
Note that no $y_i$ can be the same as an $x_i$ for if, say $y_1 \equiv x_1$, 
then from $y_1 + y_1 \equiv x_1  + x_2$ we get $x_2 \equiv y_1 \equiv x_1$ contradicting 
the way $x_1$ and $x_2$ have been chosen.  Therefore, in the case that $x_1 + x_2$ is even, 
there are $k-4$ choices for $x_3$ for which the unique $x_4$ satisfying 
\[
x_1 + x_2 \equiv x_3 + x_4
\]
will have the property that all of $x_1$, $x_2$, $x_3$, and $x_4$ are distinct.  We conclude 
that 
\[
\{ x_1 + x_2 \equiv x_3 + x_4 \}
\]
is indeed a Sidon 4-set.  This Sidon 4-set 
is counted exactly four times in this way: we could have chosen $x_3$ or $x_4$ 
after having chosen the pair $\{x_1 , x_2 \}$, and we could have also started by choosing 
the pair $\{x_3 , x_4 \}$ instead.  
When $k$ is even, the number of pairs $\{x_1 , x_2 \}$ for which 
$x_1 + x_2$ is even is exactly $\sum_{t = 1}^{ \frac{k}{2} - 1 } 2t = \frac{k^2 }{4} - \frac{k}{2}$ (this 
can be seen by looking at the diagonals in a Cayley table for $\mathbb{Z}_k$).  
Altogether, we have a count of 
\[
\frac{ ( \frac{k^2}{4} - \frac{k}{2} )  ( k - 4) }{4}
\]
Sidon 4-sets $\{x_1 + x_2 \equiv x_3 + x_4 \}$ where $x_1 + x_2$ is even.

\smallskip
\noindent
\textit{Case 2: $x_1 + x_2$ is odd}

\smallskip
If $x_1 + x_2$ is odd, then $2X \equiv x_1 + x_2$ has no solution since $\textup{gcd}(2,k)$ 
does not divide $x_1+ x_2$.  Now there will be $k-2$ choices for $x_3$ and the unique $x_4$ satisfying 
$x_1 + x_2 \equiv x_3 + x_4$ will have the property that 
$\{x_1 , x_2 , x_3 , x_4 \}$ is a 4-set.  
There are $\sum_{t = 1}^{\frac{k}{2} } (2t - 1)  = \frac{k^2}{4}$
pairs $\{x_1 , x_2 \}$ for which $x_1 + x_2$ is odd.   
This gives a count of 
\[
\frac{ ( \frac{k^2}{4} )  ( k - 2) }{4}
\]
Sidon 4-sets $\{x_1 + x_2 \equiv x_3 + x_4 \}$ where $x_1 + x_2$ is odd.

\smallskip
Combining the two cases, there are exactly 
\[
\frac{ ( \frac{k^2}{4} - \frac{k}{2} )  ( k - 4) }{4} + \frac{ ( \frac{k^2}{4} )  ( k - 2) }{4} = \frac{k^3}{8} - \frac{k^2}{2} + \frac{k}{2}
\]
Sidon 4-sets in $\mathbb{Z}_k$ when $k$ is even.  

When $k$ is odd, a similar counting argument can be done.  The key difference is 
that for any pair $\{x_1 , x_2 \}$, the congruence $2X \equiv x_1 + x_2$ has exactly 
one solution since $\textup{gcd}(k,2) = 1$ always divides $x_1 + x_2$.  This unique solution must 
be avoided when choosing $x_3$ and so there will be $k-3$ choices for $x_3$.  The rest of 
the counting is similar to as before and we obtain 
\[
\frac{ \binom{k}{2} ( k - 3) }{4} = \frac{k^3}{8} - \frac{k^2}{2} + \frac{3k }{ 8 } 
\]
Sidon 4-sets in $\mathbb{Z}_k$ when $k$ is odd.  
\end{proof}

\begin{theorem}\label{k color lb}
Let $n \geq k \geq 4$ be integers and assume that $n$ is divisible by $k$.  If $\mathcal{S} (k) $ is the family of 
all Sidon 4-sets in $\mathbb{Z}_k$, then 
\[
AR_{X+Y = Z + T}^k(n) \geq 2 | \mathcal{S} ( k ) |
\left(  \frac{n^3}{3k^3} - O_k ( n^2) \right).
\]  
\end{theorem}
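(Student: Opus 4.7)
My plan is to count the rainbow Sidon 4-sets produced by the explicit residue coloring $c:[n]\to\{1,2,\dots,k\}$ given by $c(m)\equiv m\pmod{k}$, whose color classes $X_i=\{m\in[n]:m\equiv i\pmod{k}\}$ are exactly the arithmetic progressions featured in Lemma~\ref{another new lemma}; because $k\mid n$, each $|X_i|=n/k$. Let $N$ denote the number of rainbow Sidon 4-sets under $c$, so that $AR_{X+Y=Z+T}^k(n)\geq N$.

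The first step is a reduction to Sidon 4-sets in $\mathbb{Z}_k$. Any rainbow 4-set $\{x_1>x_2>x_3>x_4\}$ with $x_1+x_4=x_2+x_3$ has four distinct residues satisfying $r(x_1)+r(x_4)\equiv r(x_2)+r(x_3)\pmod{k}$, so it maps canonically to a Sidon 4-set $\{\alpha+\beta\equiv\gamma+\delta\}\in\mathcal{S}(k)$ in which the extreme-pair residues are paired against the middle-pair residues. Conversely, for each fixed $S=\{\alpha+\beta\equiv\gamma+\delta\}\in\mathcal{S}(k)$, the ordered quadruples $(a,b,c,d)\in X_\alpha\times X_\beta\times X_\gamma\times X_\delta$ with $a+b=c+d$ correspond one-to-one with rainbow 4-sets mapping to $S$, since the residue constraints uniquely determine the ordering within each side of the Sidon equation. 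Partitioning by $S$ yields the exact identity
\[
N \;=\; \sum_{S\in\mathcal{S}(k)}\;\sum_{m\in\mathbb{Z}} r_{X_\alpha+X_\beta}(m)\,r_{X_\gamma+X_\delta}(m).
\]

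The second step is to lower bound each inner sum via Lemma~\ref{another new lemma}. Both factors vanish unless $m\equiv\alpha+\beta\equiv\gamma+\delta\pmod{k}$, so I write $m=\alpha+\beta+tk$ and apply the lemma to each factor; both bounds are triangular functions peaking near $t=n/k$. Since $\alpha,\beta,\gamma,\delta\in\{1,\dots,k\}$, the integer difference $(\alpha+\beta)-(\gamma+\delta)$ lies in $\{-k,0,k\}$, so the two triangular profiles are shifted by at most one unit of $t$ relative to one another. In each of the three cases, a direct sum of products of the form $(t+1)(t+2)$ on one half and their symmetric counterparts on the other gives
\[
\sum_{m\in\mathbb{Z}} r_{X_\alpha+X_\beta}(m)\,r_{X_\gamma+X_\delta}(m) \;\geq\; \frac{2n^3}{3k^3} - O_k(n^2).
\]
Summing this estimate over $\mathcal{S}(k)$ (whose size is supplied by Lemma~\ref{zk lemma}) delivers the theorem.

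The main obstacle is the uniform $\frac{2n^3}{3k^3}$ lower bound in the second step: verifying that the main-term coefficient is independent of which of the three cases $(\alpha+\beta)-(\gamma+\delta)\in\{-k,0,k\}$ occurs, and organizing the boundary discrepancies produced by the one-unit support shift and by the kink of the triangular bound so that everything subleading is absorbed into the single $O_k(n^2)$ error term. Once that per-$S$ estimate is secured, the rest of the argument is a direct summation.
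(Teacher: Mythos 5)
Your proposal is correct and follows essentially the same route as the paper: the same residue coloring, the same exact identity expressing the rainbow count as $\sum_{S\in\mathcal{S}(k)}\sum_m r_{X_\alpha+X_\beta}(m)r_{X_\gamma+X_\delta}(m)$, and the same application of Lemma \ref{another new lemma} to the two triangular profiles, yielding $2|\mathcal{S}(k)|\left(\frac{n^3}{3k^3}-O_k(n^2)\right)$. The only (cosmetic) difference is that you bound the full per-$S$ sum at once and track the possible $\pm k$ shift between $\alpha+\beta$ and $\gamma+\delta$ explicitly, whereas the paper splits the range of $l$ into two halves and absorbs such offsets into the $O_k(n^2)$ term.
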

\begin{proof}
Let $n \geq k \geq 4$ be integers where $k$ divides $n$.  Define the coloring 
$c: [n] \rightarrow [k]$ by $c(i) = i ( \textup{mod}~k)$ where we use residues 
in the set $\{1,2, \dots ,k \}$.  The number of 
rainbow Sidon 4-sets under $c$ is 
\begin{equation}\label{big sum}
\sum_{l = 1}^{2n} 
\sum_{1 \leq i < j < s < t \leq k } 
\left(  r_{X_i + X_j} (l) r_{X_s + X_t} (l) +
 r_{X_i + X_s} (l) r_{X_j + X_t} (l) +
  r_{X_i + X_t} (l) r_{X_j + X_s} (l)
\right)
\end{equation}
where $X_i = \{ m \in [n] : m\equiv i ( \textup{mod}~k) \}$.  
To see this, observe that if $x_1 + x_2 = x_3 + x_4$ is a Sidon 4-set that is rainbow, then there are distinct colors 
$1 \leq i < j < s < t \leq k$ with 
\[
\{ c(x_1) , c(x_2) , c(x_3) , c(x_4) \} = \{ i , j , s, t \}.
\]
This rainbow Sidon 4-set is counted exactly once by the sum (\ref{big sum}) 
precisely when $l = x_1 + x_2$, and by only one of the terms in the sum
\begin{equation}\label{rainbow counting equation}
 r_{X_i + X_j} (l) r_{X_s + X_t} (l) +
 r_{X_i + X_s} (l) r_{X_j + X_t} (l) +
  r_{X_i + X_t} (l) r_{X_j + X_s} (l).
  \end{equation}
 The unique nonzero term depends on which two colors appear on the same side of the equation $x_1 + x_2 = x_3 + x_4$.  
For instance, if colors $i$ and $j$ appear on the same side, then the first term in (\ref{rainbow counting equation}) 
is the one that counts $\{x_1, x_2 , x_3 , x_4 \}$.  

Fix an $l \in [n]$ and four distinct colors $i,j,s,t$.   
By Lemma \ref{another new lemma}, 
the product 
\[
r_{X_i  + X_j } (l) r_{X_s + X_t } (l)
\]
is not zero 
if and only if $l \equiv i + j ( \textup{mod}~k)$ and 
$l \equiv s + t ( \textup{mod}~k)$.  This clearly implies 
$i + j \equiv s + t ( \textup{mod}~k)$ and so 
$\{ i + j \equiv s +t \}$ is Sidon 4-set in $\mathbb{Z}_k$.  
For $u \in \{0,1, \dots , k  - 1\}$, let 
\[
\mathcal{S}(k, u )
\]
be the Sidon 4-sets 
$\{ \alpha + \beta \equiv \gamma + \delta \} \in \mathcal{S}(k)$ for 
which $\alpha + \beta \equiv u ( \textup{mod}~k)$.  
The collection $\{ \mathcal{S} (k,u ) : 0 \leq u \leq k - 1 \}$ forms 
a partition of $\mathcal{S}(k)$.  
Since $r_{X_i + X_j}(l) \neq 0$ if only if $l \equiv i + j ( \textup{mod}~k)$, 
(\ref{big sum}) can be rewritten as 
\[
S:=
\sum_{l = 0}^{ \frac{2n}{k} -1  } 
\sum_{u = 1}^{k} 
\sum_{ \{ \alpha + \beta \equiv  \gamma + \delta \} \in \mathcal{S}(k,u) } 
r_{ X_{ \alpha } + X_{ \beta} } (u + kl ) r_{ X_{ \gamma } + X_{ \delta } }(u + k l).
\] 
In order to use Lemma \ref{another new lemma}, we split this sum into two sums $S_1$ and $S_2$ where $S \geq S_1 + S_2$.
Define  
\[
S_1:=
\sum_{l = 0}^{ \frac{n}{k} -1  } 
\sum_{u = 1}^{k} 
\sum_{ \{ \alpha + \beta \equiv  \gamma + \delta \} \in \mathcal{S}(k,u) } 
r_{ X_{ \alpha } + X_{ \beta} } (u + kl ) r_{ X_{ \gamma } + X_{ \delta } }(u + k l)
\] 
and
\[
S_2:=
\sum_{l = \frac{n}{k}}^{ \frac{2n}{k} -2  } 
\sum_{u = 1}^{k} 
\sum_{ \{ \alpha + \beta \equiv  \gamma + \delta \} \in \mathcal{S}(k,u) } 
r_{ X_{ \alpha } + X_{ \beta} } (u + kl ) r_{ X_{ \gamma } + X_{ \delta } }(u + k l).
\] 
By Lemma \ref{another new lemma}, 
\begin{equation}\label{s1 lower bound}
S_1  \geq  \sum_{l = 0}^{ \frac{n}{k} - 1 } \sum_{ u = 1}^{k} 
\sum_{ \{ \alpha + \beta \equiv \gamma + \delta \} \in \mathcal{S}(k,u) } ( l + 1)^2 
 \geq  \sum_{l = 0}^{ \frac{n}{k} - 1 } | \mathcal{S}(k) | ( l + 1)^2 
 =
 | \mathcal{S}(k) | \left( \frac{n^3}{3k^3} - O_k (n^2) \right).
\end{equation}
A similar application of Lemma \ref{another new lemma}
gives 
\begin{equation}\label{s2 lower bound}
S_2 \geq  | \mathcal{S}(k) | \left( \frac{n^3}{3k^3} - O_k (n^2) \right).
\end{equation}
Combining (\ref{s1 lower bound}) and (\ref{s2 lower bound}), we have 
\begin{equation}\label{s lower bound}
S \geq S_1 + S_2 \geq 2 | \mathcal{S}(k) | \left( \frac{n^3}{3k^3} - O_k (n^2) \right)
\end{equation}
which tells us that the number of rainbow Sidon 4-sets under the coloring $c$ is at least the
right hand side of (\ref{s lower bound}).  
\end{proof}

\begin{corollary}\label{corollary lb for k colors}
For integers $n \geq k \geq 4$, 
the function $AR_{X+Y = Z + T}^k (n)$ satisfies 
\[
AR_{X+Y = Z+T}^k (n) \geq \left( \frac{1}{12} - \frac{1}{3k} + \frac{ \theta }{k^2}  \right) n^3 - O_k(n^2) 
\]
where $\theta = \frac{1}{3}$ if $k$ is even, and $\theta = \frac{1}{4}$ if $k$ is odd.
\end{corollary}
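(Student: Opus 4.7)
The plan is purely a calculation: Theorem \ref{k color lb} and Lemma \ref{zk lemma} together yield the corollary after a substitution and a monotonicity step to handle the case $k \nmid n$.

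Assuming first that $k \mid n$, I would plug the formula $|\mathcal{S}(k)| = \frac{k^3}{8} - \frac{k^2}{2} + \theta' k$ from Lemma \ref{zk lemma} (with $\theta' = \frac{1}{2}$ for $k$ even and $\theta' = \frac{3}{8}$ for $k$ odd) into the bound of Theorem \ref{k color lb}. The leading $n^3$ coefficient becomes
\[
\frac{2}{3 k^3}\left( \frac{k^3}{8} - \frac{k^2}{2} + \theta' k \right) = \frac{1}{12} - \frac{1}{3k} + \frac{2\theta'}{3 k^2}.
\]
Since $\frac{2 \theta'}{3}$ evaluates to $\frac{1}{3}$ for $k$ even and to $\frac{1}{4}$ for $k$ odd, this exactly matches the corollary's claimed value of $\theta$. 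The $-O_k(n^2)$ error term from Theorem \ref{k color lb} is multiplied by $2|\mathcal{S}(k)| = O(k^3)$, which for fixed $k$ remains $O_k(n^2)$.

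To extend to $k \nmid n$, I would use a monotonicity argument: writing $n = qk + r$ with $0 \leq r < k$, I apply the established bound on the initial segment $[qk]$ using the coloring from Theorem \ref{k color lb}, and color the remaining $r$ elements arbitrarily. Every rainbow Sidon 4-set in $[qk]$ under that coloring is still a rainbow Sidon 4-set in $[n]$, so $AR_{X+Y=Z+T}^k(n) \geq AR_{X+Y=Z+T}^k(qk)$. Since $(qk)^3 = n^3 - O_k(n^2)$, the coefficient of $n^3$ in the resulting bound is preserved and the lower-order discrepancy is absorbed into the $O_k(n^2)$ error.

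No real obstacle is expected: the only thing to verify is that the $\frac{1}{k^2}$ term survives the passage from $qk$ to $n$, which it does because $(qk)^3/n^3 = 1 - O_k(1/n)$ and the resulting slack contributes only $O_k(n^2)$ to the error.
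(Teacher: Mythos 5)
Your proposal is correct and follows essentially the same route as the paper: substitute the count from Lemma \ref{zk lemma} into Theorem \ref{k color lb}, check that $\frac{2}{3k^3}\bigl(\frac{k^3}{8}-\frac{k^2}{2}+\theta' k\bigr)$ gives the stated coefficient, and handle $k \nmid n$ by monotonicity via the largest multiple of $k$ below $n$, absorbing the slack into $O_k(n^2)$. The arithmetic (including $\frac{2\theta'}{3}$ giving $\frac{1}{3}$ or $\frac{1}{4}$) matches the paper's computation exactly.
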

\begin{proof}
 First assume that $n$ is divisible by $k$.  
 By Theorem \ref{k color lb} and Lemma \ref{zk lemma},
 \[
 AR_{X+Y=Z+T}^k (n) \geq  2 \left( \frac{k^3}{8} - \frac{k^2}{2} + \gamma k \right) \left( \frac{ n^3}{3  k^3 } - O_k(n^2) \right).
  \]
 where $\gamma = \frac{1}{2}$ if $k$ is even and $\gamma = \frac{3}{8}$ if $k$ is odd.  
 
 If $n$ is not divisible by $k$, then choose $r \in [k-1] $ so that $n - r$ is divisible by $k$.  
 We then have by monotonicity, 
 \[
  AR_{X+Y=Z+T}^k (n) \geq AR_{X+Y=Z+T}^k (n-r) \geq 2
  \left( \frac{k^3}{8} - \frac{k^2}{2} + \gamma k \right) \left( \frac{ (n-r)^3}{3  k^3 } - O_k(n^2) \right).
  \]
  The lower order term can be absorbed into the $O_k(n^2)$ error term so we get 
  \begin{eqnarray*}
 AR_{X+Y= Z+T}^k (n) & \geq &   2 \left( \frac{k^3}{8} - \frac{k^2}{2} + \gamma k \right) \left( \frac{ n^3}{3  k^3 } - O_k(n^2) \right) 
  \\
  & = &  \left( \frac{1}{12} - \frac{1}{3k} + \frac{ \theta }{k^2}  \right) n^3 - O_k(n^2) 
  \end{eqnarray*}
in either case.  Here $\theta = \frac{1}{3}$ if $k$ is even, and $\theta = \frac{1}{4}$ if $k$ is odd.

\end{proof}


\section{Concluding Remarks}\label{conclusion}

In this paper we studied the anti-Ramsey function $AR_{X+Y = Z + T}^k (n)$ which concerns colorings 
of $[n]$.  One could also consider colorings of $\mathbb{Z}_n$.  Write 
$AR_{X+Y \equiv  Z +T}^k ( \mathbb{Z}_n )$ for the maximum number of rainbow 
solutions to $X+Y \equiv Z+T ( \textup{mod}~n)$
over all $k$-colorings $c: \mathbb{Z}_n \rightarrow [k]$.  
 As in the case of $[n]$, we count solutions that
 only differ by ordering as the same.  This is discussed in 
detail prior to Lemma \ref{zk lemma}.  Now by Lemma \ref{zk lemma}, 
\[
AR^k_{X   +Y \equiv Z + T } ( \mathbb{Z}_n ) \leq \frac{n^3}{8} - \frac{n^2}{2} + \theta n
\]
where $\theta = \frac{1}{2}$ if $n$ is even, and $\theta = \frac{3}{8}$ if $n$ is odd.  
When $k = 4$, it is easy to improve this upper bound as follows.  Let $c: \mathbb{Z}_n \rightarrow [4]$ be a 
coloring of $\mathbb{Z}_n$ and let $X_i$ be the elements of $\mathbb{Z}_n$ assigned color $i$ by $c$.
The number of rainbow solutions to the Sidon equation $X + Y \equiv Z + T ( \textup{mod}~n)$ where colors 1 and 2 appear 
on the same side is at most 
\begin{equation}\label{product of three}
\min \{ |X_1 | |X_2 | |X_3 | , |X_1| |X_2| |X_4| , | X_1 | |X_3 | |X_4| , |X_2 | |X_3 | |X_4 | \}.
\end{equation}
Indeed, once we have chosen three values for the four variables $X$, $Y$, $Z$, and $T$, the last variable is uniquely 
determined.  Since $|X_1| + |X_2| + |X_3| +|X_4| = n$, (\ref{product of three}) is at most $\frac{n^3}{64}$.  
There are two other possible ways to obtain a rainbow solution to $X+Y \equiv Z + T ( \textup{mod}~n)$.  One is where 
colors 1 and 3 appear on the same side, and the other is where colors 1 and 4 appear on the same side.  This gives the 
upper bound 
\[
AR_{X + Y \equiv Z + T }^4 ( \mathbb{Z}_n ) \leq \frac{3n^3}{64}.
\]
As for a lower bound, a natural idea is to try the same coloring that is used to prove Theorem \ref{k color lb}.
It turns out that this is not more difficult if we consider arbitrary $k \geq 4$, nevertheless we restrict to $k =4$ for 
simplicity.  Define the coloring $c : \mathbb{Z}_n \rightarrow [4]$ by $c(i) = i ( \textup{mod}~4)$ 
where we use residues in $\{1,2,3,4 \}$ for the colors.  
If $n$ is not divisible by $4$, then 
this coloring may not be well defined!  A simple example is when $n = 5$ where $c(5) = 1$, and $c(10) = 2$, 
however, 5 and 10 are the same element of $\mathbb{Z}_5$.  An obvious way to fix this is to
fix equivalence class representatives, say $\mathbb{Z}_n = \{1,2, \dots , n \}$.
Unfortunately this does not solve the problem as 
we still require the arithmetic in $\mathbb{Z}_n$ when finding solutions to $X+Y \equiv Z + T ( \textup{mod}~n)$.
To proceed further, let us now assume that $n$ is divisible by 4 and so the coloring 
$c$ will be well defined and will not depend on how we represent the elements of $\mathbb{Z}_n$.    
It is now straightforward to adapt Lemma \ref{another new lemma} to the 
$\mathbb{Z}_n$ case.  For $1 \leq i < j \leq 4$, we would have 
\[
r_{X_i + X_j} ( i + j + 4t ) = \frac{n}{4}
\]
for all $t \in \{0,1, \dots , \frac{n}{4} - 1 \}$, and $r_{X_i + X_j } (l) = 0$ if $l \not\equiv i + j ( \textup{mod}~4)$.  
The proof of this follows along the same lines as the proof of Lemma \ref{another new lemma}, except now  
\[
i + j + 4 t  \equiv ( i + 4 \alpha  ) + ( j + 4 ( t - \alpha ) ) ( \textup{mod}~n)
\]
for all $\alpha \in \{1, \dots , \frac{n}{4} \}$.  One then obtains the lower bound
\begin{eqnarray*}
AR_{X+Y \equiv Z + T }^4 ( \mathbb{Z}_n ) & \geq & \sum_{l=0}^{ \frac{n}{4} - 1 } \sum_{u = 1}^4 
\sum_{ \{ \alpha + \beta \equiv \gamma + \delta \} \in \mathcal{S}(4 , u ) } 
r_{X_{ \alpha } + X_{ \beta} } ( u + 4l ) r_{X_{ \gamma} + X_{ \delta } } ( u + 4l ) \\
& = & \sum_{l = 0}^{ \frac{n}{4} - 1}  r_{X_1 + X_2}( 3 + 4l ) r_{X_3 + X_4 } ( 3 + 4l )
+r_{X_1 + X_4}( 1 + 4l ) r_{X_2 + X_3 } ( 1 + 4l )  \\
&  = & \sum_{l = 0}^{ \frac{n}{4} - 1} \left( \left( \frac{n}{4} \right)^2 + \left( \frac{n}{4} \right)^2 \right)
 =  \frac{n^3}{32}
\end{eqnarray*}
again, assuming $n$ is divisible by 4.  

When $k = 4$, determining an asymptotic formula for the number of rainbow solutions to the Sidon equation in $[n]$ or 
$\mathbb{Z}_n$ would certainly be interesting.  Additionally, improving the upper bound 
\[
AR_{X+Y = Z+ T}^k (n) \leq \left( \frac{1}{12} - \frac{1}{24k} \right)n^3 + O_k (n^2)
\]
seems possible.  Using the methods of this paper, one might be able to improve the $\frac{1}{24k}$ to 
$\frac{1}{12k}$, but we believe the lower bound is closer to 
the truth and so any significant improvement may require some 
new ideas.  


\section{Acknowledgment}

The authors would like to thank Michael Tait for suggesting the problem that is the focus of this paper and for some insightful 
comments.



\begin{thebibliography}{10}
 
 
 \bibitem{desilva}
 J.\ De Silva, X.\ Si, M.\ Tait, 
 Y.\ Tun\c{c}bilek, R.\ Yang, M.\ Young,
 {\em Anti-Ramsey multiplicities}
 arXiv:1801:00474v2 Mar 2018.
 
 
\bibitem{fox} 
J.\ Fox, M.\ Mahdian, R. Radoi\v ci\'c, 
Rainbow solutions to the Sidon equation,
{\em Discrete Math}.\ 308 (2008), no.\ 20, 4773--4778.  

\bibitem{jnr}
V.\ Jungi\'c, J.\ Ne\v set\v ril, R. Radoi\v ci\'c, 
Rainbow Ramsey theory, 
{\em Integers} 5 (2005), no.\ 2, A9.  

 \bibitem{lr}
 B.\ Landman, A.\ Robertson,
 Ramsey theory on the integers. Second Edition, 
 {\em American Mathematical Society, Providence, RI}, 2014.  
 
\bibitem{lev}
V.\ Lev,
On the number of solutions of a linear equation over finite sets,
{\em J.\ Combin.\ Theory Ser.\ A} 83 (1998), no.\ 2, 251--267.  


\bibitem{saad}
A.\ Saad, J.\ Wolf, 
Ramsey multiplicity of linear patterns in certain finite abelian groups,
{\em Q.\ J.\ Math}. 68 (2017), no.\ 1, 125--140.

\bibitem{tv}
T.\ Tao, V.\ Vu, 
Additive Combinatorics,
\emph{Cambridge University Press, Cambridge,} 2006.  

\end{thebibliography}
\end{document}